\documentclass[
final
, nomarks
]{dmtcs-episciences}


\usepackage[utf8]{inputenc}
\usepackage{subfigure}

%

\usepackage[round]{natbib}


\usepackage{graphicx}

\usepackage{enumerate}
\usepackage{mathbbol}
\usepackage{amssymb}
\usepackage{braket}
\usepackage{longtable}
\usepackage{caption}
\usepackage{mathtools}
\usepackage{microtype}
\usepackage{tikz}

\DeclareSymbolFontAlphabet{\mathbb}{AMSb}
\DeclareSymbolFontAlphabet{\mathbbl}{bbold}



\def\@cproof[#1]{\noindent\textbf{Proof of Claim #1:} \ignorespaces}
\def\@@cproof{\noindent\textbf{Proof of Claim:} \ignorespaces}
\AtBeginDocument{
	\def\cproof{\medskip\@ifnextchar[\@cproof\@@cproof}
	
}


\newtheorem{lemma}{Lemma}[section]
\newtheorem{theorem}[lemma]{Theorem}
\newtheorem{corollary}[lemma]{Corollary}

\newtheorem{question}{Question}

\newtheorem{assumption*}{Assumption}





\newtheorem{definition}[lemma]{Definition}




\newtheorem{claim}{Claim}
\newtheorem{claim1}{Claim}




\numberwithin{equation}{section}


\newcommand{\N}{\mathbb{N}}



\newcommand{\bs}{\backslash}

\renewcommand\AA{{\mathcal A}}

\newcommand\CC{{\mathcal C}}

\newcommand\EE{{\mathcal E}}

\newcommand\KK{{\mathcal K}}

\newcommand\PP{{\mathcal P}}
\newcommand\QQ{{\mathcal Q}}

\newcommand\TT{{\mathcal T}}




\newcommand{\tile}{\mathfrak t}

\def\Ind#1#2{#1\setbox0=\hbox{$#1x$}\kern\wd0\hbox to 0pt{\hss$#1\mid$\hss}
	\lower.9\ht0\hbox to 0pt{\hss$#1\smile$\hss}\kern\wd0}

\def\notind#1#2{#1\setbox0=\hbox{$#1x$}\kern\wd0
	\hbox to 0pt{\mathchardef\nn=12854\hss$#1\nn$\kern1.4\wd0\hss}
	\hbox to 0pt{\hss$#1\mid$\hss}\lower.9\ht0 \hbox to 0pt{\hss$#1\smile$\hss}\kern\wd0}






\usepackage{graphics}
\usepackage[outdir=./]{epstopdf}

\author{Samuel Braunfeld\affiliationmark{1}}
\title{The undecidability of joint embedding for 3-dimensional permutation classes}

\affiliation{ University of Maryland, College Park, USA}
\keywords{permutations, undecidable, joint embedding, atomic}

\received{2020-02-28}
\revised{2021-03-25, 2021-07-23}
\accepted{2021-08-10}
\begin{document}
	\publicationdetails{22}{2021}{2}{10}{6165}
	\maketitle

\begin{abstract}
As a step towards resolving a question of Ru\v{s}kuc on the decidability of joint embedding for hereditary classes of permutations, which may be viewed as structures in a language of 2 linear orders, we show the corresponding problem is undecidable for hereditary classes of structures in a language of 3 linear orders.
\end{abstract}


\section{Introduction}
In \cite{Rusk}, Ru\v{s}kuc posed several decision problems for finitely-constrained permutation classes, with the decidability of atomicity among them (and this question was recently re-posed in \cite{Jel}). A permutation avoidance class is called \textit{atomic}
  if it cannot be expressed as a union of two proper subclasses. A general hope is that understanding a permutation class can be reduced to understanding its atomic subclasses, as in the following lemma for calculating growth rates (see \cite{Vatter} for a reference). 
  
  \begin{lemma}
   Suppose $\KK$ is a permutation class, with no infinite antichain in the containment order. Then $\KK$ can be expressed as a finite union of atomic subclasses. Furthermore, the upper growth rate of $\KK$ is equal to the maximum upper growth rate among its atomic subclasses. 
   \end{lemma}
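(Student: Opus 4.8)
The plan is to combine the standard characterization of atomic classes with the well-quasi-order hypothesis. Since permutation containment is well-founded---a permutation of length $n$ contains only permutations of length at most $n$, of which there are finitely many---the assumption that $\KK$ has no infinite antichain makes $\KK$ well-quasi-ordered. I would first recall that a nonempty permutation class $\CC$ is atomic if and only if it is \emph{directed} for containment (the joint embedding property): for all $\sigma,\tau\in\CC$ there is $\pi\in\CC$ with $\sigma\preceq\pi$ and $\tau\preceq\pi$. One direction: if $\CC$ is directed and $\CC=\DD_1\cup\DD_2$ with $\DD_1,\DD_2\subsetneq\CC$, choose $\sigma\in\CC\setminus\DD_1$, $\tau\in\CC\setminus\DD_2$ and a common extension $\pi\in\CC$; as $\pi$ lies in $\DD_1$ or $\DD_2$ and these are downward closed, we get $\sigma\in\DD_1$ or $\tau\in\DD_2$, a contradiction. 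Conversely, if some $\sigma,\tau\in\CC$ have no common extension in $\CC$, then $\CC=\{\pi\in\CC:\sigma\not\preceq\pi\}\cup\{\pi\in\CC:\tau\not\preceq\pi\}$ writes $\CC$ as a union of two proper subclasses. So it is enough to show that $\KK$ is a finite union of directed subclasses.

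Next I would observe that the subclasses of $\KK$, ordered by inclusion, satisfy the descending chain condition: from $\KK\supseteq D_1\supsetneq D_2\supsetneq\cdots$ pick $x_i\in D_i\setminus D_{i+1}$; by well-quasi-order there are $i<j$ with $x_i\preceq x_j$, but $x_j\in D_j\subseteq D_{i+1}$ and $D_{i+1}$ is a down-set, so $x_i\in D_{i+1}$, contradicting the choice of $x_i$. This licenses a Noetherian induction over the subclasses of $\KK$: a directed subclass needs no decomposition, while if $D$ is not directed I pick $\sigma,\tau\in D$ with no common extension in $D$ and split $D=D_\sigma\cup D_\tau$, where $D_\sigma=\{\pi\in D:\sigma\not\preceq\pi\}$ and $D_\tau=\{\pi\in D:\tau\not\preceq\pi\}$ are downward closed and each strictly smaller than $D$ (omitting $\sigma$, resp.\ $\tau$). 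The induction hypothesis then expresses each of them, hence $D$, as a finite union of directed subclasses; taking $D=\KK$ gives the first assertion, and one checks that each directed piece is nonempty, hence genuinely atomic. The only step requiring care is the termination of the splitting, which is exactly what the descending chain condition guarantees.

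For the growth rates, fix an atomic decomposition $\KK=\CC_1\cup\cdots\cup\CC_k$ and write $\KK_n$ for the set of length-$n$ members of $\KK$, with $\overline{\mathrm{gr}}(\KK)=\limsup_{n\to\infty}\abs{\KK_n}^{1/n}$. Since each $\CC_i\subseteq\KK$ we have $\abs{(\CC_i)_n}\le\abs{\KK_n}$ for all $n$, so $\overline{\mathrm{gr}}(\CC_i)\le\overline{\mathrm{gr}}(\KK)$; the same holds for every atomic subclass of $\KK$, so $\overline{\mathrm{gr}}(\KK)$ bounds them all. For the reverse inequality, $\abs{\KK_n}\le\sum_{i=1}^{k}\abs{(\CC_i)_n}\le k\max_i\abs{(\CC_i)_n}$ gives $\abs{\KK_n}^{1/n}\le k^{1/n}\max_i\abs{(\CC_i)_n}^{1/n}$, and letting $n\to\infty$ with $k^{1/n}\to1$, together with the elementary fact that the limit superior of a pointwise maximum of finitely many sequences is the maximum of the limit superiors, yields $\overline{\mathrm{gr}}(\KK)\le\max_i\overline{\mathrm{gr}}(\CC_i)$. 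Hence $\overline{\mathrm{gr}}(\KK)=\max_i\overline{\mathrm{gr}}(\CC_i)$, which is the maximum upper growth rate among the atomic subclasses of $\KK$.
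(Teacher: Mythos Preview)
Your argument is correct. Note, however, that the paper does not actually prove this lemma: it is stated in the introduction purely as motivation, with a pointer to \cite{Vatter} for a reference, and no proof appears in the paper. What you have written is essentially the standard proof one finds in that literature: the equivalence of atomicity with the joint embedding property, the observation that the no-infinite-antichain hypothesis together with well-foundedness of containment makes $\KK$ a wqo so that its down-sets satisfy the descending chain condition, the resulting Noetherian splitting into atomic pieces, and the elementary $\limsup$ estimate for the growth rate. There is nothing to compare against in the paper itself; your write-up would serve perfectly well as the omitted proof.
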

   
   We may view permutations as structures in a language of two linear orders. Atomicity is then equivalent to the joint embedding property (see \cite{Vatter}), a standard model-theoretic notion, so we may rephrase Ru\v{s}kuc's question.
   
   \begin{definition}
   A class $\CC$ of structures has the \textit{joint embedding property (JEP)} if, given $A, B \in \CC$, there exists $C \in \CC$ such that $A,B$ embed into $C$.
   \end{definition}
   
    \begin{question} \label{q:Ruskuc}
    Is there an algorithm that, given finite set of forbidden permutations, decides whether the corresponding permutation class has the joint embedding property?
    \end{question}
    
    This problem is known to be decidable in certain restricted classes of permutations, such as monotone grid classes \cite{Waton}.
      Also, whether a permutation class is a natural class, which is a strengthening of atomicity, is decidable \cite{Murphy}.
      
      However, we believe there is a strong possibility Ru\v{s}kuc's problem is undecidable in general. We are not aware of many undecidability results in the permutation class literature, although \cite{Pak}, using methods that seem quite different from ours, proves an undecidability result about comparing the parity of the number of permutations of size $n$ in two permutation classes. 
      
      The author took a first step towards Ru\v{s}kuc's problem in \cite{Braunf}, proving the JEP is undecidable for hereditary graph classes. Although it is not yet clear whether that proof can be adapted to permutations, we here adapt it to {\em 3-dimensional permutations}, i.e. structures in a language of 3 linear orders, proving the following theorem via a reduction from the string tiling problem.
        
        \begin{theorem}
        There is no algorithm that, given a finite set of forbidden 3-dimensional permutations, decides whether the corresponding 3-dimensional permutation class has the JEP.
        \end{theorem}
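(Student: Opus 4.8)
The plan is to reduce from the string tiling problem. This problem fixes a finite tiling system $\tile$ --- an alphabet, a finite set of tiles, matching rules, and a designated seed --- and asks whether $\tile$ has a certain property; for concreteness I will phrase the argument as though that property is ``$\tile$ admits valid tilings of rectangles of every size,'' and the reduction adapts routinely to whichever precise formulation of string tiling one takes. Given $\tile$, I will compute a finite set $F_{\tile}$ of forbidden 3-dimensional permutations such that $\mathrm{Av}(F_{\tile})$ --- the class of 3-dimensional permutations avoiding every member of $F_{\tile}$ --- has the JEP if and only if $\tile$ has this property (or its negation). Since $\tile \mapsto F_{\tile}$ is obviously effective, the Theorem follows from undecidability of string tiling.

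For the encoding, view a 3-dimensional permutation as a finite point set carrying three linear orders $<_1,<_2,<_3$, and aim to make the ``large'' members of $\mathrm{Av}(F_{\tile})$ be exactly the encodings of valid $\tile$-tilings of rectangles. I would use $<_1$ as the row-major reading order of a grid and $<_2$ as the column-major reading order, so that the $\{<_1,<_2\}$-type of a point already determines its (row, column) location; each grid cell is then decorated by a bounded cluster of auxiliary points whose $<_3$-pattern names the tile sitting there. The set $F_{\tile}$ collects all small patterns incompatible with being such an encoding: patterns witnessing a non-rectangular outline, rows or columns of mismatched length, a cell carrying no tile name or two of them, a horizontally- or vertically-adjacent pair of cells whose names violate a matching rule of $\tile$, or a bottom row other than the seed. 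It is here that three orders rather than two are essential: the extra order lets one pin the grid skeleton and the adjacency relation down with finitely many local forbidden configurations, something two linear orders are too flexible to permit.

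The crux is a rigidity lemma: every member of $\mathrm{Av}(F_{\tile})$ is either harmless ``junk'' of bounded size that embeds into every sufficiently large encoding, or else is forced by the local constraints to contain a grid skeleton, from which one reads off a genuine valid $\tile$-tiling of a rectangle; conversely every such tiling is realized by some member. Moreover any embedding between two tiling-encodings must be grid-respecting, carrying rows to rows and columns to columns up to a translation, so that its image is a sub-rectangle bearing the restricted tiling. Granting the lemma, two tiling-encodings $A,B$ have a common extension inside $\mathrm{Av}(F_{\tile})$ precisely when some valid rectangular $\tile$-tiling contains both underlying tilings as translated sub-rectangles, while junk always shares a common extension with anything. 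Making the seed constraint tight and the tile set ``separated'' enough that distinct tilings cannot be aligned by accident, one arranges that all such pairs combine if and only if $\tile$ tiles rectangles of every size; hence $\mathrm{Av}(F_{\tile})$ has the JEP if and only if $\tile$ does.

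I expect the rigidity lemma to be the principal obstacle. Since $F_{\tile}$ must be finite one cannot simply forbid ``failing to be a perfect grid''; the encoding has to be designed so that the class is \emph{almost} rigid --- anything that is not harmless junk is locally forced to carry a grid skeleton that then governs every embedding into and out of it. A second genuine difficulty is making the JEP translation leak-free in both directions: no two truly incompatible tilings may acquire a common extension through a degenerate alignment or through shared junk, and no pair of compatible tilings may fail to combine for a spurious reason. Checking these ``no-leak'' statements, and explicitly enumerating the finitely many small forbidden patterns, is where most of the work will go.
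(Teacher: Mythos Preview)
Your proposal has a genuine structural gap: the ``rigidity lemma'' you want is incompatible with the class being hereditary. If $\mathrm{Av}(F_{\tile})$ contains an encoding of a valid $m\times n$ tiling, then it contains every induced substructure of that encoding --- in particular arbitrary sub-rectangles, partial rows, isolated cells, and fragments of the grid skeleton with some auxiliary clusters deleted. None of these need encode a valid tiling (they will typically violate your seed condition, or have gaps), and they are certainly not of bounded size. So the dichotomy ``bounded junk versus genuine tiling-encoding'' cannot hold. Once unbounded non-tiling fragments live in the class, your JEP characterization collapses: you would have to jointly embed, say, two large incompatible partial grids, and nothing in your framework explains why this is possible exactly when $\tile$ has a solution.

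The paper's reduction avoids this by \emph{not} attempting rigidity at all. Instead it builds two specific countable structures $A^*$ and $B^*$: $A^*$ encodes a bare grid of ``connector intervals'' and $B^*$ encodes a grid of available tile-sets, using copies of antichain elements (on which $<_1={<_3}^{opp}$) to simulate coloured vertices and edges. The constraints are designed so that any joint embedding of $A^*$ and $B^*$ with $A^*<_1 B^*$ is forced, by local propagation from the origin, to select a tile at every coordinate and hence to encode a solution. The converse --- that a solution yields JEP for \emph{arbitrary} $A,B$ in the class, not just the canonical pair --- is the hard direction, and is handled by a notion of ``weak coordinates'' that tolerates the partial, duplicated, and malformed grids that hereditary closure inevitably allows. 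The role of the third order is narrow and specific: placing $A<_1 B$ and $A<_3 B$ guarantees (since the antichain elements satisfy $<_1={<_3}^{opp}$) that no new copy of an encoding gadget can straddle the two factors, so the coloured-graph semantics survives the embedding. Your proposed use of $<_3$ to ``name tiles'' does not supply this, and without it you have no control over what predicates get accidentally created when you glue $A$ and $B$ together.
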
   
        
        A very rough sketch of the proof is as follows. We use a reduction from the (string) tiling problem, which asks whether we can tile an infinite grid using a given collection of tile types, subject to local constraints. The first two steps below ensure that the tiling problem is equivalent to whether we can jointly embed two particular 3-dimensional permutations, and the third step ensures that joint embedding for the class is equivalent to joint embedding for those two 3-dimensional permutations.
                
                \begin{enumerate}
                \item Construct two 3-dimensional permutations $A^*$, representing a grid, and $B^*$ representing a suitable collection of tiles.
                \item Choose a finite set of constraints to ensure that if $C$ jointly embeds $A^*$ and $B^*$, then it encodes a solution to the string tiling problem.
                \item Show that if the string tiling problem admits a solution, then the chosen class has joint embedding.
                \end{enumerate}
    
\section{Background}    
\subsection{The (string) tiling problem}

Rather than using a reduction from the halting problem to prove undecidability, we will use the string tiling problem, a variant of the tiling problem. The input to a tiling problem consists of a finite set $Tiles$ of tile types, as well as a set of rules of the form ``Tiles of type $i$ cannot be placed directly above tiles of type $j$'' and ``Tiles of type $k$ cannot be placed directly right of tiles of type $\ell$''. A solution to a tiling problem is a surjective function $\tau \colon \N^2 \to Tiles$, interpreted as placing tiles on a grid, that respects the tiling rules. 

\begin{theorem}[\cite{Berger}] \label{thm:Berger}
There is no algorithm that, given a sets of tile types and tiling rules, decides whether the corresponding tiling problem has a solution.
\end{theorem}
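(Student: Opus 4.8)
\medskip
\noindent\textbf{Proof proposal.}
The plan is to prove this from scratch, by a reduction from the halting problem (one cannot, of course, invoke any earlier undecidability result about tilings). Fix a deterministic single--tape Turing machine $M$ with a single halting state $q_h$, on a tape indexed by $\N$, and assume -- as one may after a routine modification -- that $M$ starts on the blank tape and never moves left of cell $0$. I will compute from $M$ a finite tile set $Tiles$ and a finite set of tiling rules so that the associated tiling problem has a solution if and only if $M$ does not halt on the blank tape; since the halting problem is undecidable, this gives the theorem.

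Two easy remarks organize the construction. First, a rule of the permitted shape is just a blacklisted vertical or horizontal adjacency, so such rules can in particular simulate edge--coloured (Wang) tiles: record the colours inside the tile type and blacklist every mismatched adjacency. Hence I may use the textbook encoding of a Turing computation by Wang tiles -- rows of the grid are the successive tape configurations of $M$, each tile carries a tape symbol together with head/state information, the horizontal rules move this information along a row, and the vertical rules advance the computation by one step per row. Second, an ``origin'' tile is free: introduce a tile $\mathfrak{o}$ and blacklist $\mathfrak{o}$ directly above, and directly right of, \emph{every} tile; then $\mathfrak{o}$ can occur only at the corner $(0,0)$, and by surjectivity it \emph{must} occur, so every solution has $\mathfrak{o}$ at $(0,0)$. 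Thus the surjectivity clause plays the role of an origin constraint.

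One then builds $Tiles$ from $\mathfrak{o}$, a left--boundary tile, ``bottom'' tiles realizing the initial configuration (blank tape, head in the start state), and the Wang tiles of the computation, with rules ensuring: $\mathfrak{o}$ sits at $(0,0)$; above $\mathfrak{o}$ and above the left--boundary tile only the latter is allowed, so column $0$ is the left edge of the tape; to the right of $\mathfrak{o}$ the bottom tiles are forced to spell out the initial configuration; the boundary colours of the computation tiles agree with those of the frame; and -- crucially -- \emph{no tile may sit directly above a tile whose state component is $q_h$}. In any solution, surjectivity pins $\mathfrak{o}$ to $(0,0)$, the frame is then forced, and the Wang rules force the remaining cells row by row to be exactly the space--time diagram of $M$ on the blank tape. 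If $M$ halts at time $T$, that diagram cannot be extended to row $T+1$, so no tiling of $\N^2$ exists at all, let alone a surjective one. If $M$ runs forever, the diagram is an infinite valid tiling of $\N^2$, and -- with a little more care in the construction -- one can ensure it is surjective (see below), so a solution exists. Hence the tiling problem is solvable exactly when $M$ does not halt.

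The substance of the proof, and the one real obstacle, is the construction itself: the tile set and rules must be designed so that the simulation is faithful, so that the computation region is \emph{rigidly} determined (this is what lets a halting run genuinely abort the tiling), and -- the point where the surjectivity clause bites -- so that in the non--halting case every tile type can still be exhibited somewhere, which calls for a standard if fussy adjustment of the tile set (e.g.\ reserving a region of the grid, disjoint from the computation, in which to display the remaining tile types, and choosing the filler tiles on it suitably). None of this is conceptually deep: it is in essence the well--known reduction from the halting problem to the (origin--constrained) domino problem, rewritten in the present vocabulary, but all of it has to be checked.
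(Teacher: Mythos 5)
The paper offers no proof of this theorem; it is cited to Berger, whose actual 1966 result concerns tilings of $\Z^2$ \emph{without} any surjectivity requirement and is notoriously difficult (it requires constructing an aperiodic tile set, precisely because no local rule can force an ``origin'' where a computation could be anchored). Your proposal takes a genuinely different and more elementary route, and it rests on a sharp observation: the paper's definition of ``solution'' demands a \emph{surjective} $\tau\colon\N^2\to Tiles$, and on the quarter--grid $\N^2$ this surjectivity clause can be converted into a free origin constraint via the tile $\mathfrak{o}$ (forbidden above and to the right of everything, hence permissible only at $(0,0)$, hence forced there by surjectivity). With an origin in hand you are really reducing to the \emph{origin-constrained} (seeded) domino problem, which indeed admits a direct halting-problem reduction and predates Berger. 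So the overall architecture is sound, and for the paper's formulation it is arguably a better-matched proof than appealing to Berger at all.

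The place where the argument is not yet a proof is the one you flagged and then waved away: in the non-halting case you must exhibit a \emph{surjective} tiling, and this is not merely a ``fussy adjustment.'' The same surjectivity that gives you the origin also bites back here, because the origin plus the determinism of the Wang-style rules rigidly forces the tiling to be the space--time diagram of $M$, so the tiles that appear are exactly the ones $M$'s run happens to use; tiles corresponding to transitions $M$ never executes then \emph{cannot} legally occur anywhere, and no surjective tiling exists even though $M$ runs forever. Your remedy (``reserve a region to display the remaining tiles'') conflicts with the rigidity you just engineered: any display region is glued to the computation by the very adjacency rules that force the simulation, and the displayed tiles carry the computation colors, so you cannot place them freely. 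Fixing this (e.g.\ relaxing the rules near the boundary to admit a display row separated by a floor, adding $O(n^2)$ bridge tiles and threading an Eulerian circuit through them, and then re-verifying that the ``only if'' direction still forces the simulation despite the relaxed rules) is real work that changes the construction, not a cosmetic check. Finally, be aware of a second, interpretive risk: if the ``surjective'' in the paper's definition is a slip (Berger's cited theorem has no such clause), your origin trick evaporates entirely and one genuinely needs aperiodicity; your proof is a proof of the statement \emph{as literally written}, not of the theorem the citation points to.
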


We will use a variant, called string tiling problems in \cite{WQO}. Here there are only two tile types, but there is some $D \in \N$ such that for every $d \leq D$, tiling rules may restrict which tiles are placed at distance $d$ to the right of a given tile, or directly above a given tile. An encoding of tiling problems as string tiling problems is given in Lemma 7.6 of \cite{WQO}, the idea being to use several tiles in the string tiling problem to encode a single tile from the standard tiling problem. This proves the analogue of Theorem \ref{thm:Berger} for the string tiling problem.

As we will be reducing from the string tiling problem, which is co-recursively enumerable, we point out here that if $\CC$ is a hereditary class of finite structures in a finite relational language, then the JEP for $\CC$ is also co-recursively enumerable. To see this, consider $A, B \in \CC$ that can be jointly embedded, as witnessed by $C \in \CC$ and embeddings $f \colon A \to C$ and $g \colon B \to C$. As $\CC$ is hereditary, the substructure of $C$ induced on $f(A) \cup g(B)$ is also in $\CC$. Thus, given $A, B \in \CC$, $|A|+|B|$ is a bound on the size of the possible witnesses for joint embedding, and they can be exhaustively checked.

\subsection{The argument for hereditary graph classes} \label{sub:graphs}

We will now sketch the argument from \cite{Braunf} for hereditary graph classes (in an expanded language with colored vertices and edges, and both directed and undirected edges), since our argument in this paper will attempt to re-encode it using 3-dimensional permutations. Although we are concerned with the JEP for finite structures in a hereditary class $\CC$, the compactness theorem implies that the JEP for the finite members of $\CC$ is equivalent to the JEP for countable members of $\CC$. Rather than work with families of increasingly large finite structures, we prefer to take our canonical models to be countable. 

\begin{figure}[h]
\begin{center}
\includegraphics[scale=.6]{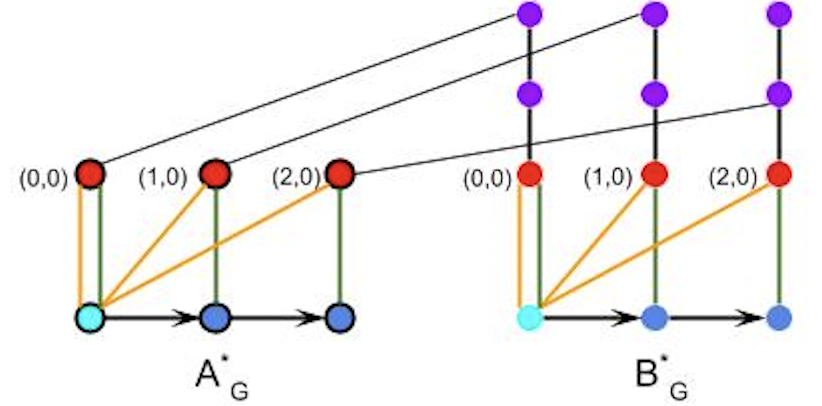}
\end{center}
\caption{A portion of the canonical models $A^*_G$ and $B^*_G$, with the grid points in $A^*_G$ tiled by tiles attached to grid points with the same coordinates in $B^*_G$. Path points are blue, with the origin a different shade. Grid points are red, their $y$-coordinate determined by an orange edge and their $x$-coordinate by a green edge. Tile points are purple. Points in 0-superscripted predicates have a black border, while points in 1-superscripted predicates do not. \\ This encodes a tiling of (0,0) with tile-type 2, (1,0) with tile-type 2, and (2,0) with tile-type 1.}
\label{fig:graph canonical}
\end{figure}

Suppose we are given a tiling problem $\TT$. First, we describe graphs corresponding to $A^*$ and $B^*$ from the rough sketch in the introduction. $A^*_G$ (see Figure \ref{fig:graph canonical}) will contain a 1-way infinite directed path. To every pair of points in this path, we attach a point, representing a grid point with coordinates taken from the attached path points. Because we must distinguish between $x$ and $y$-coordinates, we use the colored edges to attach each grid point to its coordinates. Furthermore the path points are colored distinctly from the grid points, and the origin of the path is also colored distinctly. $B^*_G$ will look like a copy of $A^*_G$, although using a disjoint set of vertex colors. Furthermore, to each grid point in $B^*_G$, path of length $\tile$ (where $\tile$ is the number of tile types in the given tiling problem), using a new color for these points. These represent a full tile set  available at each coordinate, with the different tile-types being distinguished by their distance from the corresponding grid point.

We then choose our constraints so that  when we try to jointly embed $A^*_G$ and $B^*_G$, the following is forced: for every grid point in $A^*_G$, with coordinates $(x, y)$, we must add an edge to one tile point attached to the grid point in $B$ with the same coordinates. This is interpreted as tiling the point $(x, y)$ by the corresponding tile-type, and our constraints should further enforce the local tiling rules.

As $A^*_G$ and $B^*_G$ will be in our hereditary class $\CC_\TT$, if $\CC_\TT$ has the JEP, then $\TT$ must have a solution, since we can read a valid tiling off the structure embedding $A^*_G$ and $B^*_G$. We must then show that if $\TT$ has a solution $\tau \colon \N^2 \to Tiles$, then we may jointly embed any $A, B \in \CC_\TT$. For this, we add a variety of additional constraints ensuring that if we must add edges due to the constraints in the previous paragraph, and thus are attempting to encode a valid tiling, then $A$ and $B$ look approximately like one of our canonical models $A^*_G$ and $B^*_G$. Crucially, we ensure that every grid point involved in our attempted tiling has unique coordinates $(x,y)$ on a unique path; we thus have a well-defined input to give to $\tau$, and add edges from grid points in $A$ to tile points in $B$ (or vice versa) as $\tau$ dictates.

The additional difficulties with (3-dimensional) permutations arise from the transitivity of the orders, which places severe limitations on how we may jointly embed a given pair of structures. Also, some concerns that are in common with the graph case shift in their difficulty. A key point in the graph case is that grid points and tile sets have unique coordinates. While that was simple to enforce in the graph case, it, and even the proper definition of coordinates, will be a significant concern here. On the other hand, the point of most concern in the graph case was ensuring that none of the configurations used to encode unary predicates were accidentally created by our joint embedding procedure, i.e. our method for constructing a third structure jointly embedding two given structures. Here this problem will be trivialized by taking advantage of the third linear order, but it returns to the fore when working with permutation classes.

\section{The canonical models} 

\subsection{Preliminary definitions}

We first mention that the primary reason for using a third linear order is to obtain the first claim at the beginning of Lemma \ref{lemma:tilingtoJEP}. The third order can largely be ignored otherwise, which may help in picturing the constructions.

\begin{figure}[h]
	\begin{center}
		\includegraphics[scale=.35]{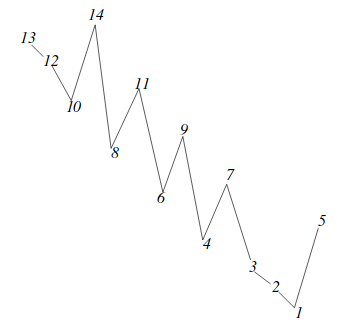}
	\end{center}
	\caption{A typical antichain element from \cite{SB}. The root is 13.}
	\label{fig:antichain}
\end{figure}

We choose an antichain $\AA$ of 3-dimensional permutations on which ${<_1} = {<_3}^{opp}$, i.e. the opposite order of $<_3$, and containing at least 20 members (10 are used for the unary predicates in this section, and we will double that number later), which we will use to encode unary predicates, which may be thought of as coloring points. We also require that each element of $\AA$ have at least 5 points, and that the $<_1$-greatest, $<_1$-least, $<_2$-greatest, and two $<_2$-least points of each element are distinct, with the $<_1$-greatest point $<_2$-below the $<_1$-least point. For example, let $\AA$ be the infinite antichain from \cite{SB} (see Figure \ref{fig:antichain}; in this figure and all others, $<_1$ is horizontal and $<_2$ is vertical), with the third order defined by ${<_1} = {<_3}^{opp}$.

For $i \in \set{0,1}$, select distinct antichain elements $E_X^i, E_Y^i, E_P^i, E_G^0, E_T^1$, and $E_O^i$, and let $\EE$ be the set of these members. Elements with a 0-superscript will be used to encode grid points, i.e. structures like the graph $A^*_G$ and their coordinates, while those with a 1-superscript will encode tile sets and their coordinates, i.e. structures like the graph $B^*_G$.

If $E \in \EE$, we say $x$ is the \textit{root} of $E$ if it is the $<_1$-least point. We will think of roots as actually representing points in the colored graph we are trying to encode, and almost all other points as being auxiliary to assist the encoding.

 We also define the following unary predicates.
\begin{enumerate}
\item (path points) $x \in P^i$ if $x$ is the root of a copy of $E_P^i$ or $E_O^i$
\item (path origins) $x \in O^i$ if $x$ is the root of a copy of $E_O^i$
\item (grid points) $x \in G^0$ if $x$ is the root of a copy of $E_G^0$
\item (tile-type 1) $x \in T_1^1$ if $x$ is the root of a copy of $E_T^1$
\item (tile-type 2) $x \in T_2^1$ if $x$ is the $<_2$-greatest point of a copy of $E_T^1$
\item (tile points) $T^1 = T_1^1 \cup T_2^1$
\end{enumerate}

Note that we do not encode points representing tile-type 2 by roots. Since these will always be paired with points representing tile-type 1, we instead represent tiles by two different points in $E_T^1$.

The antichain condition is meant to stop the following kind of situation. Suppose that $E_P^0$ embedded into $E^G_0$. Then whenever we encode a grid point by adding a copy of $E^G_0$, we would also be adding a copy of $E^P_0$, and thus unintentionally be adding a path point. 

In addition to encoding unary predicates, we will use elements of $\EE$ to encode edges between their roots and other points, using the following notion of {\em capture}.

Given a point $x$ and $E$ a copy of an element in $\EE \bs \set{E^1_T}$, we say $x$ is \textit{captured} by $E$ if $x$ is $<_2$-between the two $<_2$-least points of $E$, $E <_1 x$, and $E <_3 x$. This should be thought of as encoding a graph edge between $x$ and the root of $E$.

An example of capture is shown in Figure \ref{fig:capture}. There are two copies of an element of $\EE$ projected onto $<_1$ and $<_2$, although the left copy should be taken $<_3$-below the right copy.  We view this as encoding a directed edge from the root of the left copy to the root of the right copy, and we may encode a directed path by continuing to daisy-chain such copies.

\begin{figure}[h]
	\begin{center}
		\includegraphics[scale=.1]{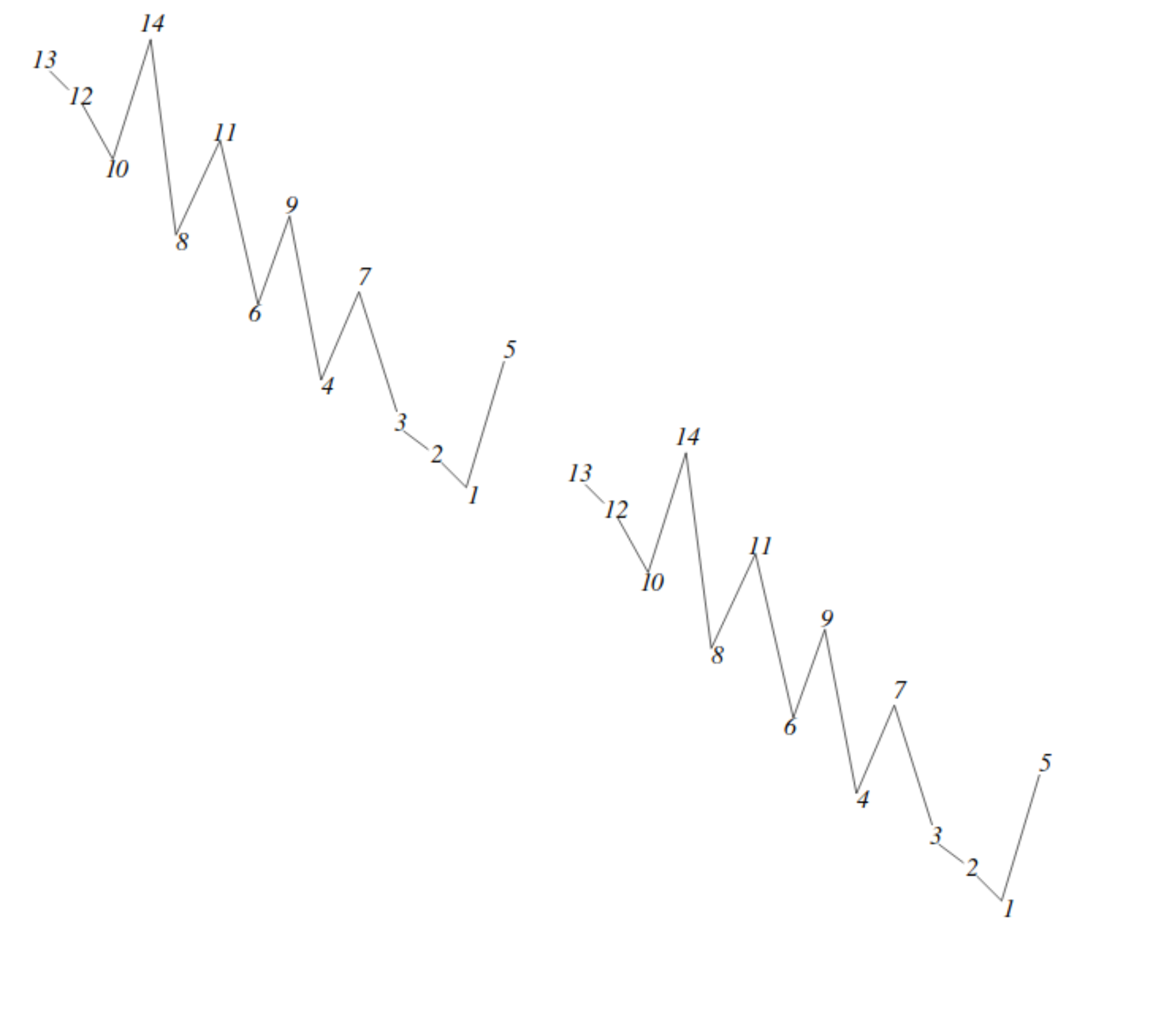}
	\end{center}
	\caption{Two copies of an element from $\EE$. The left copy is capturing the root of the right copy.}
	\label{fig:capture}
\end{figure}

Before giving the remaining definitions, we discuss the stylized portion of the canonical models shown in Figure \ref{fig:canonical3}, which should be compared to the graph version in Figure \ref{fig:graph canonical}. We have projected onto $<_1$ and $<_2$, and we will later see that $<_3$ will be determined by $<_1$. Since the precise structure of the elements of $\EE$ is unimportant, copies of those elements are represented by colored lines with two $<_2$-least points marked to display capture. As in Figure \ref{fig:graph canonical}, the blue points represent path points, with the origin a different shade. Grid points are in red and tile points in purple, with their $x$-coordinates determined by the path point captured by the copy of $E_X^i$ denoted in green, and their $y$-coordinates determined by the path point captured by the copy of $E_Y^i$ denoted in orange. Pairs of tiles at a given coordinate are connected by a line, with the tile of type 2 above and right of the tile of type 1. Finally, the copies of $E_G^0$ are the black lines connected to grid points, and they capture a tile from the pair of tiles at the corresponding coordinate. If the dividing line between $A^*_{<_1}$ and $B^*_{<_1}$ is ignored, this picture encodes a tiling of both $(0,0)$ and $(1,0)$ with tile-type 2.  The dotted horizontal lines separate different regions of the picture, but are not part of the structure.

\begin{figure}[h]
	\begin{center}
	\includegraphics[scale=.4]{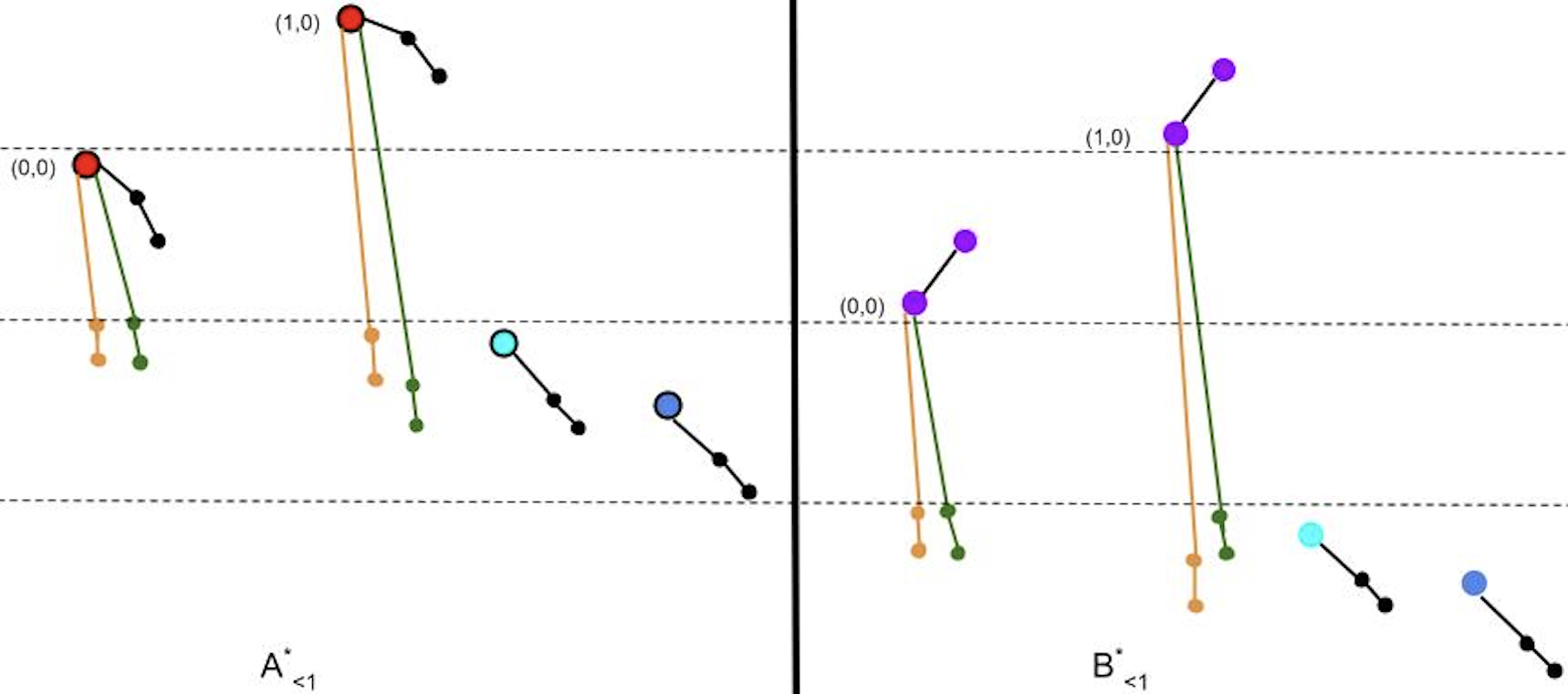}
	\end{center}
	\caption{A stylized projection onto $<_1$ and $<_2$ of a portion of the canonical models $A^*_{<_1}$ and $B^*_{<_1}$.}
	\label{fig:canonical3}
\end{figure}

The following definitions may all be seen in Figure \ref{fig:canonical3}, with the exception that the $P^i$-paths shown only have length 2 instead of being infinite.

We say  $x \in G^0$ {\em is tiled by} $y \in T^1$ if $x$ is the root of a copy of $E_G^0$ that captures $y$.

We say $t_1 \in T^1_1$ and $t_2 \in T^1_2$ \textit{form a tile set} if there exists $E$ a copy of $E^1_T$ with root $t_1$ and $<_2$-greatest point $t_2$. 

Given a point $g \in G^0$ and $x, y \in P^0$ or $g \in T_1^1$ and $x, y \in P^1$, we say $g$ is \textit{coordinatized} by $(x, y)$ if $g$ is the root of a copy of $E^0_X$ that captures $x$ and of $E^0_Y$ that captures $y$ (or in the second case, we use $E^1_X$ and $E^1_Y$).

We say $p'$ is a \textit{path-successor} of $p$ if $p, p' \in P^i$ and $p$ is the root of a copy of $E_P^i$ or $E_O^i$ that captures $p'$. 

We say $h$ is a \textit{horizontal successor} of $g$ if $g, h \in G^0$ or $g, h \in T_1^1$ and there are $x, y, x' \in P^0$ (or $P^1$ in the second case) such that $g$ is coordinatized by $(x, y)$, $h$ is coordinatized by $(x', y)$, and $x'$ is a path-successor of $x$. \textit{Vertical successor} is defined similarly, but $h$ is coordinatized by $(x, y')$ and $y'$ is a path-successor of $y$. {\em Horizontal predecessor} and {\em vertical predecessor} are defined conversely.

We define an \textit{infinite one-way $P^i$-path} to be a copy of $E_O^i$ with root $p_0$ and a sequence of copies of $E^i_P$, with roots $(p_1, p_2, \dots)$ arranged such that $p_{k+1}$ is captured by the copy of $E^i_P$ (or $E_O^i$) rooted at $p_k$, and the copy of $E_{p}^i$ (or $E_O^i$) rooted at $p_k$ is $<_1$-below that rooted at $p_{k+1}$. In this case, we say $p_0$ is the \textit{path-origin} of the path.

We say $g \in G^0$ is a \textit{grid-origin} if there is some $x \in O^0$ such that $G$ is coordinatized by $(x, x)$. We say $t \in T^1_1$ is a \textit{tile-origin} if there is some $x \in O^1$ such that $G$ is coordinatized by $(x, x)$.

Let $g \in G^0 \cup T_1^1$ be coordinatized by $(x, y)$. We say $g$ is \textit{on the x-axis} if $y \in O^i$ (for the appropriate $i$), and $g$ is \textit{on the y-axis} if $x \in O^i$ (we will sometimes also refer to a tile set being on an axis if its first tile is). Note that a grid-origin or tile-origin is on both the $x$-axis and $y$-axis.

We define a \textit{connector interval} to be the open $<_2$-interval defined by the two $<_2$-least points of a copy of $E^0_G$. We define a \textit{tile set interval} to be the open $<_2$-interval defined by the $<_1$-least point and the $<_2$-greatest point (i.e. by the two tiles) of a copy of $E^1_T$. Finally, we define a \textit{special interval} to be either a connector interval or a tile set interval. 

Given a special interval defined by some $E \in \EE$, we call the $<_2$-greater endpoint of the special interval its \textit{top endpoint}, and the $<_2$-lesser endpoint its \textit{bottom endpoint}.

Although we defined a special interval as a $<_2$-interval corresponding to a copy of an element of $\EE$, we will often conflate the special interval with its corresponding copy of an element of $\EE$. However, intersection of special intervals will always refer to intersection of the $<_2$-intervals.

\subsection{The canonical models} \label{sub:canon}
Our proof proceeds in two steps. First we prove the undecidability of the $<_1$-JEP, defined below, following the rough sketch from the introduction. This will be finished by Section \ref{sub:<1}.  Then, in Section \ref{sub:jep}, we quickly reduce from the $<_1$-JEP to the JEP. In this section, we describe our canonical models for the first step.

\begin{definition}
We say that a class of 3-dimensional permutations has the \textit{$<_1$-JEP} if it admits a joint embedding procedure in which, given factors labeled $A$ and $B$, the procedure places $A <_1 B$.
\end{definition}

 We now describe our canonical models $A^*_{<_1}$ and $B^*_{<_1}$ for the $<_1$-JEP, corresponding to the graphs $A^*_G$ and $B^*_G$ from \S \ref{sub:graphs}. We only describe $<_1$ and (sometimes) $<_2$, since $<_3$ will be determined as follows: if $x, y$ are in the same copy of an element of $\EE$ that we specify below, then $x <_1 y \iff x >_3 y$. Otherwise, $x <_1 y \iff x <_3 y$. Note that this will ensure that the only  copies of elements of $\EE$ appearing in either $A^*_{<_1}$ or $B^*_{<_1}$  will be those specified below. For suppose there is some further, unintended copy $\widehat E$ of an $E \in \EE$. By the antichain condition, it cannot embed into any single copy of an $E' \in \EE$, so there must be $x, y \in \widehat E$ occurring in two distinct specified copies of elements of $\EE$. Suppose that $x <_1 y$, so then $x <_3 y$. But this contradicts that $<_1$ and $<_3$ disagree within $E$.

It would be helpful to keep in mind the image of part of $A^*_{<_1}$ and $B^*_{<_1}$ given in Figure \ref{fig:canonical3} when reading the following construction. We start constructing $A^*_{<_1}$ by placing an infinite one-way $P^0$-path with roots $(p_0, p_1, \dots)$. Then, $<_1$-below and $<_2$-above the path, we place a sequence of points indexed by $\N^2$, increasing antilexicographically with respect to $<_1$ and $<_2$ (we say $(x, y) <_{antilex} (x', y')$ if $y < y'$ or $y=y'$ and $x < x'$); so, identifying a point with its indices, if $g <_{antilex} g'$ then $g <_{1,2} g'$. We now make each such point, which we will call grid points, the root of 3 different copies of elements of $\EE$. Consider the point $g$ indexed by $(x, y)$.
 We make $g$ the root of a copy $E_{g, X}$ of $E^0_X$, $E_{g, Y}$ of $E^0_Y$, and $E_{g, G}$ of $E^0_G$, satisfying the following.
\begin{enumerate}
\item $E_{g,X}$ captures $p_x$ and $E_{g,Y}$ captures $p_y$.
\item $E_{g, G}$ is $<_2$-above the path.
\item $E_{g,X} <_1 E_{g,Y} \bs \set{g} <_1 E_{g,G} \bs \set{g}$.
\item Let $g <_{antilex} g'$. Then for every $E, E' \in \EE$ (not necessarily distinct), any copy of $E$ rooted at $g$ is $<_1$-less than any copy of $E'$ rooted at $g'$. Furthermore, $E_{g, G} <_2 E_{g',G}$.
\end{enumerate}

We construct $B^*_{<_1}$ similarly, except using $1$-superscripted elements of $\EE$ instead of $0$-superscripted elements, and using copies of $E^1_T$ instead of $E^0_G$.

As in the graph case, we will choose our constraints so that when performing the $<_1$-JEP on $A^*_{<_1}$ and $B^*_{<_1}$, i.e. constructing a $C$ that jointly embeds both, we will be forced to tile each grid point in $A^*_{<_1}$ by a tile from the corresponding tile set in $B^*_{<_1}$.

\subsection{Picturing joint embedding} In this subsection, we refer back to Figure \ref{fig:canonical3} to help picture the process of performing $<_1$-joint embedding. This should explain some choices we made in constructing our canonical models and should help motivate some of the constraints in the next section. Also, although Figure \ref{fig:canonical3} depicts the canonical models, what we say will apply to any 3-dimensional permutations we consider.

Suppose we want to $<_1$-joint embed the structures $A=A^*_{<_1}$ and $B=B^*_{<_1}$ depicted in Figure \ref{fig:canonical3}. We must place $A <_1 B$. We will also choose to place $A<_3 B$ since, as in the construction of our canonical models, this will mean we don't create unintended copies of elements of $\EE$. Given this choice, our only freedom is in determining the $<_2$-relation between points in $A$ and points in $B$. We may view our procedure as erasing the dividing line between the two structures, keeping the points in $A$ fixed, and keeping the horizontal positions of the points in $B$ fixed but being able to slide them around vertically while maintaining their vertical ordering. By sliding the points around, we may make certain special intervals in $A$ and $B$ intersect, and in particular make certain tiles fall into corresponding connector intervals.

Figure \ref{fig:canonical3} was drawn so that minimal sliding is required. If we just erase the line and leave all the points fixed, this produces a tiling of both $(0,0)$ and $(1, 0)$ by 2-tiles. If we instead wish to tile $(0,0)$ by a 1-tile, we may slide the two tiles at $(0,0)$ upwards while keeping all other points fixed until the 1-tile falls into the connector interval at $(0,0)$ in $A$ and the 2-tile falls out. (Since we are keeping the tiles at $(1, 0)$ fixed, the tiles at $(0,0)$ will also be ``compressed'' closer together as we slide them upward. But this is an artifact of the picture, since in the actual structures there is no notion of distance, just relative ordering.) Our freedom to move the tile set  at $(0,0)$ independently of the tile set  at $(1,0)$ stems from the fact that they form disjoint $<_2$-intervals. 

Also note that it is crucial that given an enumeration of the coordinates, the connector intervals in $A$ and the tile sets in $B$ occur in the same $<_2$-order. For if $A$ were unchanged but the $(0,0)$-tile set  appeared $<_2$-above the $(1,0)$-tile set  in $B$, then it would be impossible for both the connector intervals in $A$ to capture tiles from the corresponding tile set. In the canonical models, the disjointness and ordering are ensured by point (4), requiring that the connector intervals and tile set intervals are $<_2$-increasing when ordered antilexicographically. 

\section{Constraints}

For a given string tiling problem $\TT$, we now describe the forbidden substructures in our class $\PP_\TT$, which will impose constraints on arbitrary 3-dimensional permutations in our class and how they may be jointly embedded. We will give some discussion of the constraints before listing them, dividing them into three groups.

The first group of constraints is meant to ensure that when we attempt to perform $<_1$-joint embedding on the canonical models, we must produce a solution to $\TT$. This includes Constraints \ref{c:originintersect}, \ref{c:propagateintersect}, and \ref{c:rules}. In \S \ref{sub:graphs}, we noted that we would wish our constraints to force a grid point to be tiled using a tile from a tile set  with the same coordinates. However, as we are forbidding a \emph{finite} number of finite structures, our constraints must have a \emph{local} character. Since determining the coordinates of a grid point requires walking back to the origin, and thus looking at an unbounded number of vertices, we cannot use our constraints to directly say that a grid point with given coordinates should be tiled using the tile set with the same coordinates. Instead, we will start the tiling at the origin (Constraint \ref{c:originintersect}), and then propagate it by local constraints (Constraint \ref{c:propagateintersect}). Finally, Constraint \ref{c:rules} ensures this tiling gives a valid solution of $\TT$.

The next group of constraints is meant to ensure that elements of $\PP_\TT$ look like the canonical models in certain ways. 
This includes Constraints \ref{c:path}--\ref{c:no01}. These ensure that the origin, path, and grid points/tile sets actually encode something grid-like. They also enforce some conventions we chose for the canonical models, such as that the grid points/tile sets should be antilexicographically increasing in $<_2$. We would like to demand even more from the structures in $\PP_\TT$: for example, we would like every path to have an origin point, or every grid point to have coordinates from a path. However, we cannot enforce such ``totality'' conditions since $\PP_\TT$ must be closed under substructure, so we must allow for partial structures.

In addition to allowing for partial grids, we don't impose any constraints to ensure the uniqueness of grids or paths, so a structure in $\PP_\TT$ can have multiple distinct paths each with its own grid attached. While we could enforce uniqueness by forbidding substructures, this would require us to make points in distinct factors equal when performing joint embedding (for example, if we demand a unique path origin, then if there is a path origin in each factor, they must be identified with each other). While this may be viable, it seemed as though it would greatly increase the complexity of the arguments.

Our final group of constraints  is concerned with the intersection of special intervals, primarily to control the interaction between multiple copies of a grid, some of which may be partial grids. For example, if we have multiple grids of connector intervals in a single structure, all connector intervals with given coordinates $(x, y)$ should intersect, otherwise we could not insert a given tile into all of them when performing joint embedding. This group, which will play a more technical role, 
includes Constraints \ref{c:originintersect}--\ref{c:axisintersect}.

The reason for the overlap between the first and last groups is that forcing tilings is essentially a special case of forcing the intersection of special intervals. If $I_G$ is a connector interval and $I_T$ a tile interval, then the grid point of $I_G$ is tiled by a tile from $I_T$ if and only if $I_G$ and $I_T$ intersect and $I_G <_{1,3} I_T$.



Given a string tiling problem $\TT$, we now define a class $\PP_\TT$ of 3-dimensional permutations by forbidding substructures to enforce the constraints below. Some constraints are initially described informally, with the formal description of the forbidden substructures nested below. For a few other constraints, we describe the forbidden substructures after the entire list.

\begin{enumerate}
\item \label{c:path} Path points have at most 1 predecessor and at most 1 successor.
\item \label{c:originpred} Path origins have no predecessor.
\item \label{c:coords} Special intervals are coordinatized by at most one pair of points.
\item \label{c:pathbelow} Path points, and their associated copies of $E_P^i$ (or $E_O^i$), are $<_2$-below all copies of $E^0_G$ and $E^1_T$.

\item \label{c:speciallex} 
Special intervals are antilexicographically increasing in $<_2$ with respect to their coordinates.
\begin{enumerate}
\item Let $I, I'$ be a pair of special intervals, with $I'$ a horizontal or vertical successor of $I$. Then $I <_2 I'$.
\item Let $I, I'$ be a pair of special intervals. Suppose that $I'$ is on the $y$-axis, and $I$ has a horizontal predecessor $I_{hp}$ with $I_{hp} <_2 I'$. Then $I <_2 I'$.
\end{enumerate}

\item \label{c:no01} No point can belong to a copy of both a $0$-superscripted and $1$-superscripted element of $\EE$

\item \label{c:originintersect} All special intervals corresponding to grid-origins or tile-origins intersect. Furthermore, if $I^0$ corresponds to a grid-origin and $I^1$ to a tile-origin and $I^0 <_1 I^1$, then $I^0 <_3 I^1$.

\item \label{c:propagateintersect} Two special intervals must intersect if their respective predecessors intersect.

Let $I, I'$ be special intervals.
\begin{enumerate}
\item Suppose $I$ is on neither the $x$ nor $y$-axis. Suppose $I$ has horizontal predecessor $I_{hp}$ and vertical predecessor $I_{vp}$, and $I'$ has horizontal predecessor $I'_{hp}$ and vertical predecessor $I'_{vp}$. If $I_{hp}$ intersects $I'_{hp}$ and $I_{vp}$ intersects $I'_{vp}$, then $I$ must intersect $I'$.
\item Suppose $I$ is on the $x$-axis. Suppose $I$ has horizontal predecessor $I_{hp}$ and $I'$ has horizontal predecessor $I'_{hp}$. If $I_{hp}$ intersects $I'_{hp}$, then $I$ must intersect $I'$. 
\item Suppose $I$ is on the $y$-axis. Suppose $I$ has vertical predecessor $I_{vp}$ and $I'$ has vertical predecessor $I'_{vp}$. If $I_{vp}$ intersects $I'_{vp}$, then $I$ must intersect $I'$. 
\end{enumerate}

Furthermore, if $I <_1 I'$ in any of the above cases, then $I <_3 I'$.

\item \label{c:intersectpred} If two special intervals intersect, then their respective predecessors must intersect.

Let $I, I'$ be special intervals (allowing $I=I'$).
\begin{enumerate}
\item Suppose $I$ is on neither the $x$ nor $y$-axis. Suppose $I$ has horizontal predecessor $I_{hp}$ and vertical predecessor $I_{vp}$, and $I'$ has horizontal predecessor $I'_{hp}$ and vertical predecessor $I'_{vp}$. If $I$ intersects $I'$, then $I_{hp}$ must intersect $I'_{hp}$ and $I_{vp}$ must intersect $I'_{vp}$.
\item Suppose $I$ is on the $x$-axis. Suppose $I$ has horizontal predecessor $I_{hp}$ and $I'$ has horizontal predecessor $I'_{hp}$. If $I$ intersects $I'$, then $I_{hp}$ must intersect $I'_{hp}$. 
\item Suppose $I$ is on the $y$-axis. Suppose $I$ has vertical predecessor $I_{vp}$ and $I'$ has vertical predecessor $I'_{vp}$. If $I$ intersects $I'$, then $I_{vp}$ must intersect $I'_{vp}$. 
\end{enumerate}

\item \label{c:specialintersect} If $I_1, I_2$, and $I_3$ are special intervals, and $I_1$ and $I_2$ intersect $I_3$, then $I_1$ and $I_2$ intersect.

\item \label{c:axisintersect} Let $I$ and $I'$ be special intervals that intersect, and suppose $I$ is on the $x$-axis (resp. $y$-axis). If $I'$ is coordinatized, then $I'$ is on the $x$-axis (resp. $y$-axis).

\item \label{c:rules} The tiling rules of $\TT$ are respected.

\end{enumerate}

We now describe the forbidden substructures corresponding to Constraints \ref{c:originpred}, \ref{c:coords}, \ref{c:axisintersect}, and \ref{c:rules}.

For Constraint \ref{c:originpred}, we first forbid any structure consisting of a copy of $E^i_O$ with its root captured by a copy of $E^i_P$. Note that this corresponds to several forbidden substructures, since there are many ways the points of the copy of $E^i_O$ can interleave with those of the copy of $E^i_P$ in $<_2$, while still having the root of $E^i_O$ be captured. This will be true in all other constraints, i.e. the forbidden configurations we describe will correspond to many forbidden substructures, depending on how the points of the copies of the elements of $\EE$ interleave. For Constraint \ref{c:originpred}, we also forbid any structure consisting of a copy of $E^i_O$ with its root captured by another copy of $E^i_O$. 

For Constraint \ref{c:coords}, we forbid any structure consisting of  two copies of $E^i_X$ (or of $E^i_Y$) intersecting at their root. We also forbid any copy of $E^i_X$ or $E^i_Y$ capturing two points that are in $P^i$.

For Constraint \ref{c:axisintersect}, we forbid any structure consisting of the intervals $I$ and $I'$ as described, where the $x$-coordinate (resp. $y$-coordinate) of $I$ is in $O^i$ and the $x$-coordinate (resp. $y$-coordinate) of $I'$ is in $P^i\bs O^i$. 

For Constraint \ref{c:rules}, suppose $\TT$ forbids a tile of type $j$ at distance $d$ to the right of (resp. above) a tile of type $i$. Then we forbid any substructure consisting of $g, g' \in G^0$ such that $g'$ is a $d$-fold horizontal (resp. direct vertical) successor of $g$ (note that this implies the existence of suitable copies of $E^0_G$, $E^0_X, E^0_Y$, and $E^0_P$), $t \in T^1_i, t' \in T^1_j$ such that $t'$ is the $d$-fold horizontal (resp. direct vertical) successor of $t$, and finally $g$ is tiled by $t$ and $g'$ is tiled by $t'$.

\section{Weak coordinates}

When we perform joint embedding on two structures $A$ and $B$, where $A$ contains a $G^0$-grid, and $B$ a grid of tile-sets, Constraints \ref{c:originintersect} and \ref{c:propagateintersect} will force that the connector intervals in the $G^0$-grid in $A$ are tiled using points from $B$, since tiling is a particular case of special intervals intersecting, as discussed before the constraint list. However, connector intervals may be forced to capture tiles for other reasons.

 Consider the following scenario. There is a connector interval $I \subset A$ that is part of the $G^0$-grid and another connector interval $I' \subset A$ that is part of another $G^0$-grid that is missing a grid-origin. Constraints \ref{c:originintersect} and \ref{c:propagateintersect} will not force us to tile $I'$. However, it may be that the endpoints of $I$ are $<_2$-between the endpoint of $I'$, so by tiling $I$ we must also inadvertently tile $I'$. Then, if $I'$ has successors in its own partial $G^0$-grid, Constraint \ref{c:propagateintersect} takes effect and we may be forced to tile them as well. 

We see that in addition to the tiling of a connector interval being forced by the usual propagation along coordinate paths, the tiling can also be forced due to intersection properties, and then propagate as usual. Thus, in addition to considering a special interval to have coordinates $(x, y)$ if it is coordinatized by the $x^{th}$ and $y^{th}$ points on a path with a path-origin, we will also want to consider all special intervals that intersect such intervals to have coordinates $(x, y)$.

\begin{definition}
Given a special interval $I$, we say $I$ is \textit{weakly coordinatized by $(x, y) \in \N^2$} if one of the following cases holds.
\begin{enumerate}
\item $(x, y)  = (0,0)$: $I$ is, or intersects, a grid-origin or tile-origin.
\item $x=0, y\neq 0$: $I$ has a vertical predecessor weakly coordinatized by $(0, y-1)$ or intersects some special interval $I'$ with such a predecessor.
\item $x\neq0, y=0$: $I$ has a horizontal predecessor weakly coordinatized by $(x-1, 0)$ or intersects some special interval $I'$ with such a predecessor.
\item $x,y \neq 0$: $I$ has a horizontal predecessor weakly coordinatized by $(x-1, y)$ and a vertical predecessor weakly coordinatized by $(x, y-1)$, or intersects some special interval $I'$ with such predecessors.
\end{enumerate}

We say a point is \textit{weakly coordinatized by $(x, y)$} if it is an endpoint of some special interval, i.e. either part of a tile set or one of the two $<_2$-least points of $E^0_G$, weakly coordinatized by $(x, y)$.
\end{definition} 

In the definition above, instead of specifying that $I$ has weak coordinates $(x, y)$ if it intersects an appropriate $I'$, we might have only required that there is a chain of special intervals $(I_0 = I, I_1, \dots, I_n = I')$ with each $I_i$ intersecting $I_{i+1}$. But then by Constraint \ref{c:specialintersect}, we would have $I$ intersects $I'$, so this does not yield anything new.



We will now show that several properties enforced by our constraints for our earlier notion of coordinates will also hold for weak coordinates. 

\begin{lemma} \label{lemma:wc unique}
The weak coordinates of special intervals are unique.
\end{lemma}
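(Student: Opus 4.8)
The plan is to induct on the "antilexicographic" level of the coordinates — equivalently, prove by induction on $n$ that every special interval has at most one weak coordinate $(x,y)$ with $x+y \le n$ (or, cleaner, induct on $\max(x,y)$ or on the antilexicographic rank), and use the intersection constraints (Constraints \ref{c:originintersect}, \ref{c:intersectpred}, \ref{c:specialintersect}, \ref{c:speciallex}, \ref{c:axisintersect}) to rule out two different weak coordinates on the same interval. The key structural facts I want to exploit are: (i) by Constraint \ref{c:specialintersect}, the relation "intersects" behaves transitively on any triple, so the set of all special intervals weakly coordinatized by a fixed $(x,y)$ is a "clique" under intersection together with all the genuinely coordinatized ones; (ii) by Constraint \ref{c:speciallex}, genuine coordinates force a strict $<_2$-ordering that is antilexicographic, so two intervals with different genuine coordinates that are related by the successor relations cannot intersect; and (iii) by Constraint \ref{c:intersectpred}, intersection of intervals pushes down to intersection of their (horizontal/vertical) predecessors, which is exactly what lets the induction descend.

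The main steps, in order. \textbf{Step 1 (base case).} Show no special interval is weakly coordinatized by both $(0,0)$ and some $(x,y)\ne(0,0)$. Suppose $I$ is weakly coordinatized by $(0,0)$, so $I$ is or intersects a grid-/tile-origin $I_0$; and also weakly coordinatized by, say, $(x-1,y)$-plus-$(x,y-1)$ predecessors (the other cases are analogous). Using Constraint \ref{c:intersectpred} repeatedly, intersection of $I$ with $I_0$ forces intersection of predecessors, and $I_0$ being an origin has predecessors that are again origins (on both axes) — one chases this down and contradicts Constraint \ref{c:speciallex}, since an origin interval is $<_2$-minimal among its grid while a nonzero-coordinate interval is strictly $<_2$-above its predecessors, forcing a violation of the antilexicographic order, or one contradicts Constraint \ref{c:axisintersect} (an origin is on both axes, so anything it intersects must be on both axes, hence has coordinates of the form $(0,0)$). \textbf{Step 2 (inductive step).} Assume uniqueness for all coordinates of rank $< n$ and suppose $I$ is weakly coordinatized by two distinct $(x,y)\ne(x',y')$ of rank $\le n$, at least one of rank $n$. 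Unwind both definitions: each gives $I$ either as having, or as intersecting some $I'$ having, appropriate horizontal/vertical predecessors of strictly smaller rank. By Constraint \ref{c:specialintersect} the relevant intervals all pairwise intersect, so by Constraint \ref{c:intersectpred} their predecessors intersect pairwise; by the induction hypothesis those predecessors have unique weak coordinates, forcing $(x-1,y)=(x'-1,y')$ and $(x,y-1)=(x',y'-1)$ (with the appropriate degenerate versions on the axes), hence $(x,y)=(x',y')$ — contradiction. One must handle the cross-cases where one coordinate lies on an axis and the other does not; here Constraint \ref{c:axisintersect} does the work, since intersecting intervals must agree on being on the $x$- or $y$-axis, which pins down whether the first/second coordinate is $0$.

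The step I expect to be the main obstacle is the bookkeeping in \textbf{Step 2} around the "or intersects some $I'$ with such predecessors" clause combined with the axis cases: one has to be careful that when $I$ only intersects (rather than equals) an interval $I'$ carrying the predecessors, the predecessors of $I'$ — not of $I$ — are the ones whose coordinates the induction hypothesis controls, and then transport agreement back to $I$ via Constraints \ref{c:intersectpred} and \ref{c:specialintersect}. A secondary subtlety is making the induction parameter genuinely well-founded: since weak coordinates are defined recursively via predecessors of strictly smaller antilexicographic rank (indeed $x+y$ strictly drops), induction on $x+y$ is clean, but one should note at the outset that every chain of predecessor-steps in the definition terminates, so that the recursion grounds out at $(0,0)$. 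With that in place the argument is a routine, if somewhat lengthy, case chase driven entirely by Constraints \ref{c:speciallex}, \ref{c:originintersect}, \ref{c:intersectpred}, \ref{c:specialintersect}, and \ref{c:axisintersect}.
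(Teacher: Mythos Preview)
Your proposal is correct and follows essentially the same route as the paper: antilexicographic induction, with the base case handled via Constraint~\ref{c:axisintersect} (an origin is on both axes, so anything intersecting it must have origin coordinates and hence no predecessors), and the inductive step by passing through Constraint~\ref{c:specialintersect} to make the witnessing intervals intersect, then Constraint~\ref{c:intersectpred} to push intersection down to predecessors, whose weak coordinates are unique by induction. The paper's write-up is slightly leaner than your sketch: it does not invoke Constraints~\ref{c:speciallex} or~\ref{c:originintersect} at all in this lemma, and the base case is dispatched purely by Constraint~\ref{c:axisintersect} without the detour through ``$I_0$ being an origin has predecessors that are again origins'' (origins have no predecessors, so that phrasing is a bit off; the actual contradiction is that $J'$'s coordinates are forced to be origins yet must have path-predecessors).
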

\begin{proof}
Suppose $I$ has weak coordinates $(x, y)$ and $(x', y')$. First, suppose $(x, y) = (0,0)$. Then $I$ must intersect a grid origin or path origin $J$ (allowing $I = J$), and also $I$ intersects a special interval $J'$ (allowing $I=J'$) such that $J'$ has predecessor(s) with weak coordinates $(x'-1,y')$ and/or $(x',y'-1)$, so $J'$ has coordinates on a path. By Constraint \ref{c:specialintersect}, $J$ and $J'$ intersect. By Constraint \ref{c:axisintersect}, the $x$ and $y$-coordinates of $J'$ must be path origins, and so cannot have predecessors, which is a contradiction.

Now suppose $(x, y), (x', y') \neq (0,0)$, with $(x, y) <_{antilex} (x',y')$. We will further suppose $x,x' \neq 0, y,y' \neq 0$, although we will return to these cases afterward. By induction, we may assume all special intervals with weak coordinates antilexicographically less than $(x, y)$ have unique weak coordinates.

Then we may find special intervals $J$ and $J'$ (possibly equal to $I$) such that the following hold.
\begin{enumerate}[(i)]
\item $J$ and $J'$ intersect $I$, and thus intersect each other.
\item $J$ has a horizontal predecessor weakly coordinatized by $(x-1, y)$ and a vertical predecessor weakly coordinatized by $(x, y-1)$.
\item $J'$ has a horizontal predecessor weakly coordinatized by $(x'-1, y')$ and a vertical predecessor weakly coordinatized by $(x', y'-1)$.
\end{enumerate}

 As $J$ and $J'$ intersect, by Constraint \ref{c:intersectpred} the horizontal predecessor of $J$ must intersect that of $J'$, and similarly for vertical predecessors. By induction, we may assume the predecessors of $J$ and $J'$ have unique weak coordinates. Thus we have $x=x', y=y'$. 
 
 In the case $y=0$ (the case $x=0$ is similar), we must also have that $y'=0$ by Constraint \ref{c:axisintersect}. We then only get horizontal predecessors for $J$ and $J'$, but we may still finish as in the previous case.
\end{proof}

\begin{lemma} \label{lemma:wc inter}
All special intervals weakly coordinatized by $(x, y)$ intersect.
\end{lemma}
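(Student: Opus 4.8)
The plan is to prove Lemma \ref{lemma:wc inter} by induction on the antilexicographic order of $(x,y)$, mirroring the structure of the proof of Lemma \ref{lemma:wc unique}. Suppose $I$ and $I'$ are both weakly coordinatized by $(x,y)$; we want to show $I$ and $I'$ intersect. The base case is $(x,y) = (0,0)$: here each of $I, I'$ either is or intersects a grid-origin or tile-origin. By Constraint \ref{c:originintersect} all special intervals corresponding to grid-origins or tile-origins intersect each other, so $I$ intersects an origin interval $J_0$ and $I'$ intersects an origin interval $J_0'$, and $J_0$ intersects $J_0'$. Two applications of Constraint \ref{c:specialintersect} (first to $I, J_0', J_0$, wait — more carefully: from $J_0$ and $J_0'$ both intersecting... ) — the cleanest route is: $J_0$ and $J_0'$ intersect, and $I$ intersects $J_0$; if $I$ also intersected $J_0'$ we could apply Constraint \ref{c:specialintersect} to $I, I', J_0'$ once we know $I'$ intersects $J_0'$. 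Since an origin interval intersects all other origin intervals, in fact $I$ intersects $J_0'$ (apply Constraint \ref{c:specialintersect} to $I, J_0, J_0'$: $J_0$ and $J_0'$ both intersect the origin interval... here I should just observe $J_0, J_0'$ are themselves origin intervals so they intersect directly, then Constraint \ref{c:specialintersect} applied to $I, J_0', $ with witness... ). The point is that transitivity-type statements chain via Constraint \ref{c:specialintersect}, and the base case goes through.

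For the inductive step, assume $(x,y) \neq (0,0)$ and that the statement holds for all antilexicographically smaller coordinates. Treat first the generic case $x \neq 0$, $y \neq 0$. Unwinding the definition of weak coordinatization, there are special intervals $J$ and $J'$ (possibly equal to $I$, $I'$ respectively) with: $J$ intersects $I$, $J'$ intersects $I'$, $J$ has a horizontal predecessor $J_{hp}$ weakly coordinatized by $(x-1,y)$ and a vertical predecessor $J_{vp}$ weakly coordinatized by $(x,y-1)$, and similarly $J'$ has $J'_{hp}$ weakly coordinatized by $(x-1,y)$ and $J'_{vp}$ weakly coordinatized by $(x,y-1)$. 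By the induction hypothesis, $J_{hp}$ intersects $J'_{hp}$ (both weakly coordinatized by $(x-1,y)$, which is antilexicographically less than $(x,y)$) and $J_{vp}$ intersects $J'_{vp}$. Now Constraint \ref{c:propagateintersect}(a) applies to $J$ and $J'$: their horizontal predecessors intersect and their vertical predecessors intersect, so $J$ intersects $J'$. Finally, $I$ intersects $J$, $J$ intersects $J'$, and we want $I$ intersects $I'$; since $I'$ intersects $J'$, apply Constraint \ref{c:specialintersect} to $I, J', J$ — wait, we need the hypothesis form "$I_1$ and $I_2$ intersect $I_3$." We have $J$ intersects $J'$ and $I$ intersects $J$; to get $I$ intersects $J'$ apply Constraint \ref{c:specialintersect} with $I_1 = I$, $I_2 = J'$, $I_3 = J$. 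Then $I$ intersects $J'$ and $I'$ intersects $J'$, so by Constraint \ref{c:specialintersect} with $I_3 = J'$ we get $I$ intersects $I'$.

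The remaining cases are the axis cases $x = 0, y \neq 0$ and $x \neq 0, y = 0$. These are handled exactly as above but using only one family of predecessors: in the case $x = 0$, $J$ and $J'$ have vertical predecessors weakly coordinatized by $(0, y-1)$, which intersect by induction, and Constraint \ref{c:propagateintersect}(c) forces $J$ to intersect $J'$; the chaining via Constraint \ref{c:specialintersect} is identical. One subtlety to record: in unwinding the definition we should be careful that "$I$ has a [horizontal/vertical] predecessor weakly coordinatized by $\ldots$ or intersects some $I'$ with such predecessors" genuinely produces the intermediate interval $J$, and that $J$ is on the correct axis (or off both axes) so that the appropriate clause of Constraint \ref{c:propagateintersect} is available — this is where Constraint \ref{c:axisintersect} may be needed, as in the last paragraph of the proof of Lemma \ref{lemma:wc unique}, to ensure that if $J$ is on the $x$-axis then $J'$ is too. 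I expect the main obstacle to be exactly this bookkeeping: making sure that the intervals $J, J'$ extracted from the definition sit on matching axes so that a single clause of Constraint \ref{c:propagateintersect} governs both, and threading the (merely ternary) Constraint \ref{c:specialintersect} enough times to simulate the transitivity of "intersects" along the short chains $I - J - J' - I'$. The arithmetic and induction are otherwise routine.
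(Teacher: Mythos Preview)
Your proposal is correct and follows essentially the same route as the paper: antilexicographic induction on $(x,y)$, with the base case handled by Constraint~\ref{c:originintersect} (plus Constraint~\ref{c:specialintersect} to chain through the witnessing origin intervals), and the inductive step extracting intermediaries $J,J'$ from the definition, applying the inductive hypothesis to their predecessors, invoking Constraint~\ref{c:propagateintersect} to get $J$ and $J'$ to intersect, and then chaining via Constraint~\ref{c:specialintersect}. The paper's write-up is terser---it collapses the two applications of Constraint~\ref{c:specialintersect} into one sentence and dispatches the axis cases by pointing back to Lemma~\ref{lemma:wc unique}---but the underlying argument is the same, and the subtlety you flag about matching axes (resolved via Constraint~\ref{c:axisintersect} and the observation that successors inherit the relevant axis from their predecessors) is exactly what the paper is gesturing at.
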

\begin{proof}
We proceed by antilexicographic induction on $(x, y)$. If $(x, y) = (0,0)$, then this is immediate from Constraint \ref{c:originintersect}.
 
 Otherwise, assume $x,y \neq 0$ (as in Lemma \ref{lemma:wc unique}, these cases just require using Constraint \ref{c:axisintersect} and a single predecessor), and let $I_1, I_2$ have weak coordinates $(x, y)$. Then $I_1$ intersects a special interval $I'_1$ such that $I'_1$ has a horizontal predecessor weakly coordinatized by $(x-1, y)$ and a vertical predecessor weakly coordinatized by $(x, y-1)$, and $I_2$ similarly intersects some interval $I'_2$. By induction, the respective predecessors intersect. Thus by Constraint \ref{c:propagateintersect}, $I_1'$ and $I_2'$ intersect, and so $I_1$ and $I_2$ intersect by Constraint \ref{c:specialintersect}.
\end{proof}

\begin{corollary} \label{cor:wc}
(1) Suppose $a$ is the endpoint of a special interval $I$ and is $<_2$-between 2 points weakly coordinatized by $(x, y)$. Then $a$ is weakly coordinatized by $(x, y)$.

(2) All 1-tiles weakly coordinatized by $(x, y)$ are $<_2$ all 2-tiles weakly coordinatized by $(x, y)$.

(3) Suppose $I$ is weakly coordinatized by $(x, y)$. If $x,y \neq 0$ and $I_{hp}$ and $I_{vp}$ are horizontal and vertical predecessors of $I$, then $I_{hp}$ is weakly coordinatized by $(x-1, y)$ and $I_{vp}$ by $(x, y-1)$. If $y=0$ (resp. $x = 0$), the same holds, but only with $I_{hp}$ (resp. $I_{vp}$).
\end{corollary}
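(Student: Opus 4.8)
The three parts will all follow from Lemmas \ref{lemma:wc unique} and \ref{lemma:wc inter} together with one preliminary observation that I would establish first: \emph{intersection closure}, namely that if $I$ and $J$ are special intervals with $I$ intersecting $J$ and $J$ weakly coordinatized by $(x,y)$, then $I$ is also weakly coordinatized by $(x,y)$. This is proved by unwinding the definition of the weak coordinatization of $J$. In every clause, $J$ either directly carries the relevant data (it is a grid- or tile-origin, or has predecessors with the appropriate weak coordinates) or it intersects a special interval $J'$ that does. In the first case $I$ intersects $J$, so $I$ falls under the ``or intersects some special interval $I'$ with such a predecessor'' alternative of the same clause with $I'=J$ (and in the $(0,0)$ case $I$ intersects the grid- or tile-origin $J$ directly). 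In the second case $I$ and $J'$ both intersect $J$, so $I$ intersects $J'$ by Constraint \ref{c:specialintersect}, and again $I$ falls under that alternative with $I'=J'$.

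For part (1), write $b_1 <_2 a <_2 b_2$ with $b_1, b_2$ weakly coordinatized by $(x,y)$; by definition each $b_i$ is an endpoint of a special interval $J_i$ weakly coordinatized by $(x,y)$. By Lemma \ref{lemma:wc inter}, $J_1$ intersects $J_2$; I would fix a point $z$ in $J_1 \cap J_2$ and compare it with $a$ in $<_2$. A short case check on which endpoint of $J_i$ each $b_i$ is (the configurations where $a$ fails to land in the interior of $J_1$ or $J_2$ put $z$ on the wrong side of $b_1$ or $b_2$, which is impossible) shows that $a$ lies in the interior of $J_1$ or of $J_2$. Then the special interval $I$, having $a$ as an endpoint, intersects that $J_i$, and intersection closure gives that $I$, hence $a$, is weakly coordinatized by $(x,y)$. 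For part (2), suppose toward a contradiction that a $1$-tile $t_1$ and a $2$-tile $t_2$, both weakly coordinatized by $(x,y)$, satisfy $t_2 \leq_2 t_1$. By Constraint \ref{c:no01} the special interval witnessing the weak coordinatization of $t_1$ is a tile set interval, and since $t_1 \in T_1^1$ is the $<_1$-least point of its tile set, it is the bottom endpoint of a tile set interval $I$ weakly coordinatized by $(x,y)$; likewise $t_2$ is the top endpoint of a tile set interval $I'$ weakly coordinatized by $(x,y)$. But then every point of $I'$ is $<_2 t_2 \leq_2 t_1$ while every point of $I$ is $>_2 t_1$, so $I$ and $I'$ are disjoint, contradicting Lemma \ref{lemma:wc inter}.

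For part (3) with $x,y \ne 0$, note first that since $I$ has a vertical predecessor it is not on the $x$-axis and since it has a horizontal predecessor it is not on the $y$-axis, so clause (a) of Constraint \ref{c:intersectpred} is the one that applies to $I$. Unwinding ``$I$ weakly coordinatized by $(x,y)$'' produces a special interval $I'$ (possibly $I'=I$) with $I$ intersecting $I'$, such that $I'$ has a horizontal predecessor $I'_{hp}$ weakly coordinatized by $(x-1,y)$ and a vertical predecessor $I'_{vp}$ weakly coordinatized by $(x,y-1)$. Applying Constraint \ref{c:intersectpred}(a) to the pair $(I, I')$ (using that the constraint allows $I=I'$, and that $I_{hp}$, $I_{vp}$ are a horizontal and a vertical predecessor of $I$), $I_{hp}$ intersects $I'_{hp}$ and $I_{vp}$ intersects $I'_{vp}$, so intersection closure finishes it. The cases $y=0$ and $x=0$ are identical, using clause (b), resp.\ (c), of Constraint \ref{c:intersectpred}; the one extra step is to check that $I$ is on the $x$-axis, resp.\ $y$-axis, so that the clause applies, which I would get by an antilexicographic induction mirroring those in Lemmas \ref{lemma:wc unique} and \ref{lemma:wc inter}: a coordinatized special interval weakly coordinatized by $(x,0)$ inherits a path-origin $y$-coordinate from a grid- or tile-origin, either directly along a chain of horizontal predecessors or across a single intersection, the latter step invoking Constraint \ref{c:axisintersect}.

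I expect part (1) to be the main obstacle: parts (2) and (3) drop out quickly from Lemma \ref{lemma:wc inter} and Constraint \ref{c:intersectpred} once intersection closure is in place, whereas part (1) requires the order-theoretic argument pinning $a$ into the interior of one of $J_1$, $J_2$, and one has to be careful bookkeeping which endpoint of each $J_i$ the point $b_i$ is.
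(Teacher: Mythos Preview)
Your proposal is correct and follows essentially the same route as the paper: your ``intersection closure'' is exactly the remark the paper makes immediately after defining weak coordinates, and your arguments for (1) and (2) coincide with the paper's terser versions. The one place worth flagging is (3), where the paper's entire proof is the single line ``If not, the weak coordinates of $I$ would not be unique, contradicting Lemma~\ref{lemma:wc unique}''; your direct appeal to Constraint~\ref{c:intersectpred} plus intersection closure (and your induction via Constraint~\ref{c:axisintersect} for the axis cases) unpacks what that line is really pointing at, and is the clearer of the two. Incidentally, your expectation that (1) is the main obstacle is backwards---the paper dispatches (1) in two sentences, and it is (3) where the compression is heaviest.
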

\begin{proof}
$(1)$ Suppose $a$ is $<_2$-between $b,c$ weakly coordinatized by $(x, y)$. If $b, c$ belong to the same copy of an element of $\EE$, then that copy intersects $I$, and we are done by Lemma \ref{lemma:wc inter}. If $b,c$ belong to different copies of elements of $\EE$, their respective special intervals intersect each other by Lemma \ref{lemma:wc inter}, and so intersect $I$, and we are again done.

$(2)$ If not, there would be a pair of non-intersecting tile-intervals weakly coordinatized by $(x, y)$, contradicting Lemma \ref{lemma:wc inter}.

$(3)$ If not, the weak coordinates of $I$ would not be unique, contradicting Lemma \ref{lemma:wc unique}.
\end{proof}

\begin{lemma} \label{lemma:wc lex}
Suppose $I$ is weakly coordinatized by $(x, y)$, $I'$ is weakly coordinatized by $(x', y')$, and $(x, y) <_{antilex} (x', y')$. Then $I <_2 I'$.
\end{lemma}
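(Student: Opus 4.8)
The plan is to run an antilexicographic induction on $(x',y')$ (equivalently, on the pair $((x,y),(x',y'))$ ordered by $(x',y')$), reducing the general case to the case where $I$ and $I'$ are ``one step apart'' in the coordinate grid, and then invoking Constraint \ref{c:speciallex} together with Lemma \ref{lemma:wc inter} (all special intervals with the same weak coordinates intersect) and Corollary \ref{cor:wc}(3) (predecessors of a weakly coordinatized interval are weakly coordinatized by the predecessor coordinates). First I would dispose of the base-type case where $(x,y) = (0,0)$: here $I$ is, or intersects, a grid- or tile-origin, so by Lemma \ref{lemma:wc inter} we may as well replace $I$ by an origin interval $I_0$ (it suffices to show $I_0 <_2 I'$, since if $I_0$ and $I$ intersect and $I_0 <_2 I'$ while $I' \not>_2 I$, then $I$ and $I'$ would intersect, and then so would $I_0$ and $I'$ by Constraint \ref{c:specialintersect}; one has to be slightly careful that ``$I <_2 I'$'' means the $<_2$-intervals are disjoint with $I$ below, and argue that intersection is impossible — I'll address this below). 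An origin interval lies on both axes, so for any $I'$ with $(x',y') \neq (0,0)$ the relevant predecessors of (an interval intersecting) $I'$ exist and are weakly coordinatized by strictly antilexicographically smaller coordinates; walking back along the coordinate paths from $I'$ to the origin and applying Constraint \ref{c:speciallex}(a) repeatedly — each horizontal or vertical successor step strictly increases $<_2$ — gives $I_0 <_2 I'$.

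For the inductive step, assume $(0,0) <_{antilex} (x,y) <_{antilex} (x',y')$. Using Lemma \ref{lemma:wc inter} I may replace $I$ by any special interval with weak coordinates $(x,y)$ and $I'$ by any with weak coordinates $(x',y')$ (the same disjointness-vs-intersection bookkeeping as above), and using the definition of weak coordinatization I may pass to intervals $J$ (weak coords $(x,y)$) and $J'$ (weak coords $(x',y')$) that actually have honest predecessors of the predicted weak coordinates, namely $J_{hp}, J_{vp}$ and $J'_{hp}, J'_{vp}$ (or just the horizontal/vertical one on an axis). Now split on how $(x,y)$ and $(x',y')$ compare. If $y < y'$, then $J_{vp}$ has weak coordinates $(x, y-1)$ with $y-1 < y' - 1$, and... more cleanly: compare $(x,y)$ with the coordinates of $J'_{hp} = (x'-1, y')$ and $J'_{vp} = (x', y'-1)$. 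If $(x,y) <_{antilex} (x', y'-1)$ or $(x,y) <_{antilex} (x'-1,y')$, apply the induction hypothesis to get $J <_2$ (that predecessor of $J'$), and then Constraint \ref{c:speciallex}(a) gives (that predecessor) $<_2 J'$, so $J <_2 J'$ and hence $I <_2 I'$. The remaining possibility is that $(x,y)$ is antilexicographically $\geq$ both $(x'-1,y')$ and $(x',y'-1)$ while $(x,y) <_{antilex} (x',y')$; a short case check forces $x = 0$, $x' \neq 0$, $y = y'$ (so $I'$ lies to the right along a horizontal path), i.e. $I$ is on the $y$-axis and $I'$ has a horizontal predecessor $I'_{hp}$ weakly coordinatized by $(x'-1, y)$. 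Then Constraint \ref{c:speciallex}(b), with $I$ on the $y$-axis and $I'_{hp}$ a horizontal predecessor of $I'$ with $I'_{hp} <_2 I$ or $I'_{hp} <_2 I'$... here one applies the induction hypothesis to $I$ versus $I'_{hp}$ (coordinates $(0,y)$ vs $(x'-1,y)$, which is strictly antilex-smaller than $(x',y')$) to learn $I <_2 I'_{hp} <_2 I'$, again via Constraint \ref{c:speciallex}(a) for the last step — except when $x' - 1 = 0$ and we have to use \ref{c:speciallex}(b) directly. (The symmetric situation with the roles of the two coordinates swapped is handled the same way using the other clause.)

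The main obstacle I expect is the disjointness bookkeeping: ``$I <_2 I'$'' asserts the two $<_2$-intervals are disjoint with $I$ entirely below $I'$, so at each reduction step where I replace $I$ or $I'$ by an intersecting interval of the same weak coordinates, I must rule out the possibility that the new pair actually intersects. The clean way is to prove the contrapositive-flavored statement first: if $I$ and $I'$ intersect then they have equal weak coordinates (this is essentially contained in the proof of Lemma \ref{lemma:wc unique}, via Constraints \ref{c:intersectpred}, \ref{c:axisintersect}, \ref{c:specialintersect}), so two intervals with distinct weak coordinates are automatically $<_2$-disjoint; it then only remains to pin down which is below, and that is exactly what the predecessor-walking argument above does. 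A secondary nuisance is the profusion of axis cases ($x$ or $y$ equal to $0$ for either interval), but each is routine given Constraint \ref{c:axisintersect} and Corollary \ref{cor:wc}(3), exactly as in the proofs of Lemmas \ref{lemma:wc unique} and \ref{lemma:wc inter}.
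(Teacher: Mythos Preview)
Your overall strategy --- antilexicographic induction, replacing $I$ and $I'$ by intersecting representatives via Lemma~\ref{lemma:wc inter} and a transitivity/disjointness argument (the paper packages this as a short Claim), then stepping through predecessors using Constraint~\ref{c:speciallex}(a) --- is the same as the paper's, and your disjointness bookkeeping is correct.

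The gap is in your case analysis. The ``remaining possibility'' you identify, namely $x = 0$, $x' \neq 0$, $y = y'$, is not a genuine leftover: there $I'$ has a horizontal predecessor at $(x'-1,y)$ with $(0,y) \leq_{antilex} (x'-1,y)$, so your earlier branch (induction hypothesis or Lemma~\ref{lemma:wc inter}, followed by Constraint~\ref{c:speciallex}(a)) already disposes of it. The case your scheme actually misses is $x' = 0$, $y' = y+1$, $x > 0$: here $I'$ is on the $y$-axis with its only predecessor at $(0,y)$, and $(x,y) >_{antilex} (0,y)$, so comparing $I$ to a predecessor of $I'$ gives nothing useful. This is precisely where Constraint~\ref{c:speciallex}(b) enters, but with the roles reversed from how you invoke it: it is $I'$ that lies on the $y$-axis, and one must take the horizontal predecessor $I_{hp}$ of $I$ (at $(x-1,y)$), establish $I_{hp} <_2 I'$, and then conclude $I <_2 I'$ from \ref{c:speciallex}(b). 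Note that establishing $I_{hp} <_2 I'$ is itself an instance of the same problematic shape (coordinates $(x-1,y)$ versus $(0,y+1)$), so a single induction on $(x',y')$ does not close the loop; the paper runs a secondary induction on $x$ at this point. Your attempted invocation of \ref{c:speciallex}(b) with $I$ on the $y$-axis and $I'$ carrying the horizontal predecessor would, read literally, yield $I' <_2 I$, the wrong direction.
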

\begin{proof}
 Fix $I$ with weak coordinates $(x, y)$ and $I'$ with weak coordinates $(x', y')$. By induction, it is sufficient to consider the cases $(x',y') = (x+1, y)$ and $(x',y')=(x',y+1)$.
 
\begin{claim1}
Let $I, I', J, J'$ be special intervals. Suppose $I$ intersects $I'$, $J$ intersects $J'$, and $I' <_2 J'$. Then $I <_2 J$.
\end{claim1}
\medskip
{\noindent\textbf{Proof of Claim:} \ignorespaces} Suppose not. Then $I$ must intersect $J$. But then by Constraint \ref{c:specialintersect}, $I'$ must intersect $J'$. 
\hfill $\lozenge$
\medskip

First assume $(x',y') = (x+1, y)$. Then $I'$ intersects some interval $J'$ with a horizontal predecessor $J'_{hp}$ weakly coordinatized by $(x, y)$, which in turn intersects $I$ by Lemma \ref{lemma:wc inter}. By Constraint \ref{c:speciallex}(a), we have $J'_{hp} <_2 J'$, and so $I <_2 I'$ by the Claim. By induction, we get the same result for $(x',y') = (x+i, y), i>0$.

The case $(x',y')=(x',y+1)$ is similar, though more involved.  Then, $I'$ intersects some interval $J'$ with a vertical predecessor $J'_{vp}$ weakly coordinatized by $(x', y)$. If $x < x'$, then by the previous case, $I <_2 J'_{vp}$, and if $x = x'$ then $I$ intersects $J'_{vp}$ by Lemma \ref{lemma:wc inter}. As $J'_{vp} <_2 J'$ by Constraint \ref{c:speciallex}(a), the Claim gives $J'_{vp} <_2 I'$, and so $I <_2 I'$.

So suppose $x = x'+i$, $i \geq 0$. It suffices to consider the case $x' = 0$, since increasing $x'$ only increases the $<_2$-position of $I'$, by the first case. We proceed by induction on $i$, with the case $i=0$ handled above. We get $J'_{vp}$ as above, and similarly get that $I$ intersects some interval $J$ with a horizontal predecessor $J_{hp}$ weakly coordinatized by $(x'+(i-1),y)$. By induction, $J_{hp} <_2 I'$. Then by Constraint \ref{c:speciallex}(b), $J <_2 I'$, so the Claim gives $I <_2 I'$. 
\end{proof}

\section{Reductions}
\subsection{Reductions with the $<_1$-JEP} \label{sub:<1}
We begin with the easy direction, that if $\PP_\TT$ has the $<_1$-JEP then performing joint embedding on our canonical models will encode a solution to $\TT$.

\begin{lemma} \label{lemma:1JEPtotile}
	Let $\TT$ be a string tiling problem, and $\PP_\TT$ the corresponding 3-dimensional permutation class. If $\PP_\TT$ has the $<_1$-JEP, then $\TT$ has a solution. 
\end{lemma}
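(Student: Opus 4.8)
The plan is to take the canonical models $A^*_{<_1}$ and $B^*_{<_1}$, which both lie in $\PP_\TT$ by construction, and apply the hypothesised $<_1$-JEP to obtain a single $C \in \PP_\TT$ jointly embedding both with $A^*_{<_1} <_1 B^*_{<_1}$. From $C$ we will read off a function $\tau \colon \N^2 \to Tiles$ and verify it is a valid solution of $\TT$. Concretely, in $C$ the $G^0$-grid coming from $A^*_{<_1}$ and the grid of tile sets coming from $B^*_{<_1}$ both survive; every grid point $g$ of $A^*_{<_1}$ is coordinatized by a genuine path (the $P^0$-path in $A^*_{<_1}$, which has a path-origin), so it has honest coordinates $(x,y)$, and likewise every tile set of $B^*_{<_1}$ is coordinatized by the $P^1$-path and so is weakly coordinatized by a genuine $(x,y) \in \N^2$. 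The first task is to show that in $C$ the connector interval $I_g$ of each grid point $g$ with coordinates $(x,y)$ captures exactly one tile, and that tile belongs to the tile set weakly coordinatized by the same $(x,y)$; then we set $\tau(x,y)$ to be the type of that captured tile.

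The heart of the argument is forcing this tiling, and it proceeds by antilexicographic induction on $(x,y)$, exactly paralleling the weak-coordinate machinery already set up in Section on weak coordinates. For the base case $(x,y) = (0,0)$: the grid-origin connector interval $I^0$ of $A^*_{<_1}$ and the tile-origin interval $I^1$ of $B^*_{<_1}$ must intersect by Constraint~\ref{c:originintersect}, and since $A^*_{<_1} <_1 B^*_{<_1}$ and (as noted for the canonical construction) we may take $A <_3 B$, we get $I^0 <_{1,3} I^1$; by the remark just before the constraint list, intersection together with $I^0 <_{1,3} I^1$ is exactly "the grid point of $I^0$ is tiled by a tile from $I^1$", so $I^0$ captures a tile of the origin tile set. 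Uniqueness of which tile — i.e. that $I^0$ does not capture both tiles of the tile set — follows because capturing both tiles would violate Constraint~\ref{c:rules}: $\TT$ as a string tiling problem forbids at least some configuration at distance $0$, and more directly the two tiles of a tile set are the $<_1$-least and $<_2$-greatest points of a copy of $E^1_T$, so their $<_2$-positions force that a connector interval strictly between its own two $<_2$-least points cannot contain both unless the tile set interval fails to be a single interval; here one argues from the geometry of $E^1_T$ and $E^0_G$ together with Constraint~\ref{c:coords}. For the inductive step at $(x,y)$ with, say, $x,y \neq 0$: the connector interval $I_g$ has horizontal and vertical predecessors which, by induction, are tiled, hence intersect the corresponding predecessor tile sets; by Constraint~\ref{c:propagateintersect} the connector interval $I_g$ must then intersect the tile set interval weakly coordinatized by $(x,y)$ (using Lemma~\ref{lemma:wc inter} to know that tile set interval is well-defined and all such intersect), and again $I_g <_{1,3}$ that tile set because $A <_{1,3} B$, so $I_g$ captures a tile from it. That the tile comes from the tile set with the \emph{same} coordinates, and not some other, is Lemma~\ref{lemma:wc unique} together with Corollary~\ref{cor:wc}; the axis cases $x=0$ or $y=0$ use the single-predecessor clauses of Constraints~\ref{c:propagateintersect} and~\ref{c:axisintersect}, exactly as in the weak-coordinate lemmas.

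Having defined $\tau$, it remains to check it is a solution of $\TT$: it is total because every $(x,y) \in \N^2$ indexes a grid point of $A^*_{<_1}$ whose connector interval is tiled by the above; it is well-defined because of the uniqueness of the captured tile; it is surjective onto $\{1,2\}$ because $B^*_{<_1}$ supplies a full tile set at every coordinate and one checks that both tile-types are realised as captured tiles somewhere (for a string tiling problem with two tile types this is automatic from the constraints not forcing a constant tiling — indeed if $\TT$ has no solution then $\PP_\TT$ failing JEP is exactly what we are proving, so in the contrapositive direction we simply read off whatever $\tau$ arises and it lands in $Tiles$); and finally it respects the tiling rules because a violation — a forbidden type $j$ at distance $d$ right of or directly above a type $i$ — would give grid points $g, g'$ with $g'$ the $d$-fold horizontal (or direct vertical) successor of $g$ in $A^*_{<_1}$, tiles $t \in T^1_i$, $t' \in T^1_j$ with $t'$ the $d$-fold successor of $t$, and $g$ tiled by $t$, $g'$ tiled by $t'$, which is precisely a forbidden substructure under Constraint~\ref{c:rules}, contradicting $C \in \PP_\TT$.

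The main obstacle I expect is the forcing step in the middle paragraph: one must argue carefully that when $C$ is produced by the $<_1$-JEP, the connector intervals of the $A^*_{<_1}$-grid genuinely capture tiles rather than the constraints being vacuously satisfied — i.e. that intersection of special intervals is actually forced (not merely permitted) starting from the origin and propagating. This is where all three of Constraints~\ref{c:originintersect}, \ref{c:propagateintersect}, \ref{c:rules} and the weak-coordinate lemmas must be combined just right, and where the hypothesis $A^*_{<_1} <_1 B^*_{<_1}$ (hence, via the $<_3 = <_1^{opp}$-within-$\EE$ convention, $A <_3 B$) is used to convert "intersect" into "tile". Everything else — totality, well-definedness, rule-respecting — is a direct bookkeeping translation of the constraint list.
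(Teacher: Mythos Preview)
Your approach is essentially the paper's: apply the $<_1$-JEP to the canonical models, use Constraint~\ref{c:originintersect} to start at the origin and Constraint~\ref{c:propagateintersect} to propagate along the grid, and use Constraint~\ref{c:rules} to check the tiling rules. The inductive structure you spell out is exactly what ``propagates'' means in the paper's one-paragraph proof.

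Two points need correcting, however. First, your uniqueness argument for the captured tile is both unnecessary and unsound. The paper does \emph{not} claim a connector interval captures exactly one tile; it explicitly writes ``if it captures tiles of both types, we may pick either''. Your proposed justification --- that capturing both would violate Constraint~\ref{c:rules}, or that the geometry of $E^1_T$ and $E^0_G$ together with Constraint~\ref{c:coords} forbids it --- does not work: Constraint~\ref{c:rules} only forbids specific successor patterns, not a single interval containing two tile-types, and Constraint~\ref{c:coords} concerns coordinatization, not capture. Simply drop this step and define $\tau(x,y)$ by an arbitrary choice when both are captured.

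Second, the phrase ``we may take $A <_3 B$'' is misphrased. The $<_1$-JEP hands you some $C$ with $A^*_{<_1} <_1 B^*_{<_1}$; you do not get to choose the $<_3$-relation. What actually does the work is that Constraints~\ref{c:originintersect} and~\ref{c:propagateintersect} themselves contain the clause ``if $I <_1 I'$ then $I <_3 I'$'', so for the relevant pairs of special intervals the $<_{1,3}$-ordering needed to convert ``intersect'' into ``capture'' is \emph{forced by the constraints on $C$}, not selected by you. Your conclusion is right, but the reason is the constraint list, not a free choice in the joint embedding. (Your surjectivity discussion is also muddled, but the paper itself does not address this point.)
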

\begin{proof}
	Let $A^*_{<_1}$ and $B^*_{<_1}$ be the canonical models from \S \ref{sub:canon}. Then $A^*_{<_1}, B^*_{<_1} \in \PP_\TT$, so we can apply the $<_1$-JEP yielding $C^*_{<_1}$. As $A^*_{<_1} <_1 B^*_{<_1}$ there can be no identifications of points between the factors, so we may assume $C^*_{<_1}$ has $A^*_{<_1} \sqcup B^*_{<_1}$ as a base set. Furthermore, by Constraint \ref{c:originintersect}, the grid-origin in $A^*_{<_1}$ must capture some tile from the tile-origin in $B^*_{<_1}$. This then propagates to a tiling of the entire grid in $A^*_{<_1}$ by tiles from the grid in $B^*_{<_1}$ by Constraint \ref{c:propagateintersect}, while respecting the rules of the tiling problem by Constraint \ref{c:rules}. We thus associate to $C^*_{<_1}$ the tiling $\theta(x, y) = i$ if the connector interval associated to the $G^0$-point with coordinates $(x, y)$ captures a tile of type $i$ (if it captures tiles of both types, we may pick either).
\end{proof}

Now we must assume $\TT$ has a solution and show $\PP_\TT$ has the $<_1$-JEP. The difficulties in defining a joint embedding procedure arise from the way in which arbitrary $A, B \in \PP_\TT$ can deviate from the canonical models. We have already mentioned that an element of $\PP_\TT$ can have multiple grids and partial grids, which led us to consider weak coordinates. Another possible issue that arises from this is that a structure can contain an incorrect tiling. For a simple example, consider a tiling problem where the only constraints are that a 2-tile cannot be adjacent to either a 1-tile or a 2-tile, so the unique solution is to tile the whole grid with 1-tiles. However, $A \in \PP_\TT$ can contain a full grid of connector intervals, and a single tile-set at the origin, such that the connector interval at the origin captures the 2-tile. Then if $B \in \PP_\TT$ contains a full grid of tile sets, when we joint-embed $A$ and $B$, we will have to arrange so that the connector interval at the origin of $A$ captures the 1-tile from the grid of tile-sets in $B$, while still capturing the 2-tile from the partial grid of tile sets in $A$, and making the tile origins in $A$ and $B$ intersect. This will be possible, but relies on having only two types of tile, which is why we imposed this restriction.

From Lemma \ref{lemma:wc lex}, we see that the special intervals weakly coordinatized by $(x,y)$ are contained in a particular $<_2$-interval, separated from all other special intervals with distinct weak coordinates, and that the intervals are antilexicographically increasing in $<_2$. Thus an arbitrary element of $\PP_\TT$ recovers some of the structure of Figure \ref{fig:canonical3}, although with possibly many special intervals at a given weak coordinate. As is already done Figure \ref{fig:canonical3}, our first step when performing joint embedding will be to put the intervals in the two factors with the same weak coordinates at roughly the same $<_2$-level.  This is done in the following definition, where we would like to simply say that the bottom coordinates of the $<_2$-intervals $I_A$ and $I_B$ are set equal with respect to $<_2$, as are the top coordinates. However, as distinct points cannot be equal with respect to $<_2$, the definition is more convoluted.

\begin{definition}
Let $C$ be a structure equipped with a partial order $<$, and let $A, B \subset C$ be totally $<$-ordered. Let $I_A, I_B$ be closed $<$-intervals in $A,B$. Extending $<$ such that $b_1 < I_A < b_2$ for any $b_1 < I_B < b_2$, and such that $a_1 < I_B < a_2$ for any $a_1 < I_A < a_2$, will be called \textit{$<$-aligning $I_A$ with $I_B$}. Note, this may not be possible, depending on the initial $<$-configuration.
\end{definition}

Given $A, B$, we will use the definition in our joint embedding procedure as follows. After taking the disjoint union $C = A \sqcup B$, $<_2$ will be a partial order on $C$. We will partition $A$ into $<_2$-intervals $I_{A, i}$ for $i \in \N$, with the condition that if $i < j$ then the $I_{A, i} <_2 I_{A, j}$, and similarly partition $B$ into $<_2$-intervals $I_{B, i}$. For each $i$, we will then align $I_{A, i}$ with $I_{B, i}$. This yields a sequence of disjoint increasing $<_2$-intervals in $C$, and we will then complete $<_2$ to a linear order on $C$ by completing it on each such interval separately. By the disjointness, the completion in any one interval can be done independently of the completion on other intervals.

Before beginning our next lemma, we repeat that the claim at the beginning of its proof is the reason we use a third linear order in this paper.

\begin{lemma} \label{lemma:tilingtoJEP}
Let $\TT$ be a string tiling problem, and $\PP_\TT$ the corresponding 3-dimensional permutation class. If $\TT$ has a solution, then $\PP_\TT$ has the $<_1$-JEP.
\end{lemma}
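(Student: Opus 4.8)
The plan is to describe an explicit joint embedding procedure that, given $A, B \in \PP_\TT$, produces a $C \in \PP_\TT$ with $A <_1 B$. First I would set $C = A \sqcup B$ as a base set, and fix the $<_1$ and $<_3$ orders in the obvious way: put every point of $A$ below every point of $B$ in $<_1$, and likewise in $<_3$, keeping the internal orders of $A$ and $B$; \emph{within} $C$ the orders $<_1$ and $<_3$ agree except inside a single copy of an element of $\EE$, exactly as in the construction of the canonical models. The opening claim of the proof --- the reason a third order is used --- should be that with this choice, no copy of an element of $\EE$ spans both factors, so the copies of elements of $\EE$ in $C$ are precisely those of $A$ together with those of $B$; hence all unary predicates, the notions of capture, coordinatization, special interval, etc., in $C$ restrict correctly to $A$ and $B$, and the only genuine freedom is in interleaving $A$ and $B$ in $<_2$.

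The heart of the argument is choosing that $<_2$-interleaving. I would use the structure extracted in Section~5: by Lemma~\ref{lemma:wc lex} the special intervals of $A$ weakly coordinatized by a fixed $(x,y)$ lie in one $<_2$-block $I_{A,(x,y)}$, these blocks are disjoint and antilexicographically $<_2$-increasing, and similarly for $B$; all remaining (non-weakly-coordinatized) points of each factor can be gathered into further blocks placed so as not to interfere. Enumerating $\N^2$ antilexicographically, I would align $I_{A,(x,y)}$ with $I_{B,(x,y)}$ for every $(x,y)$, using the alignment definition just given, and then complete $<_2$ to a linear order block by block, with the completion inside the block for $(x,y)$ governed by the tiling value $\tau(x,y)$: arrange the $<_2$-order of tiles and connector endpoints so that every connector interval weakly coordinatized by $(x,y)$ captures a tile of type $\tau(x,y)$ from every tile-set weakly coordinatized by $(x,y)$, while still capturing whatever it was forced to capture inside its own factor. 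The two-tile-types restriction and Corollary~\ref{cor:wc}(2) are what make this local completion possible: within a block one only has 1-tiles below 2-tiles, and connector endpoints to be slotted in, so one chooses the single threshold consistent with $\tau(x,y)$; because a connector interval already in $A$ (or $B$) may have been capturing the ``wrong'' tile from a \emph{partial} grid in its own factor, one must check that the block still admits a completion capturing a $\tau(x,y)$-tile from the full grid --- here is where one uses that with only two tile types the tile-set in the full factor supplies a tile of \emph{every} type.

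Next I would verify that the resulting $C$ lies in $\PP_\TT$, i.e. that none of Constraints~\ref{c:path}--\ref{c:rules} is violated. The structural constraints (\ref{c:path}--\ref{c:no01}) concern only configurations internal to a single copy of an element of $\EE$ or to a path/grid, none of which are newly created, so they pass by the opening claim together with the fact that $A, B \in \PP_\TT$. For Constraint~\ref{c:speciallex} one invokes Lemma~\ref{lemma:wc lex} directly, since the block ordering is exactly antilexicographic. The intersection constraints (\ref{c:originintersect}--\ref{c:axisintersect}) are the ones requiring care: after alignment, two special intervals with the \emph{same} weak coordinates intersect (Lemma~\ref{lemma:wc inter}), intervals with \emph{different} weak coordinates sit in disjoint blocks hence do not intersect, and intervals not weakly coordinatized at all are parked in separate blocks; one then checks \ref{c:originintersect}, \ref{c:propagateintersect}, \ref{c:intersectpred}, \ref{c:specialintersect}, \ref{c:axisintersect} case by case against this clean intersection pattern, and checks the ``$<_1$ implies $<_3$'' clauses using that $A <_3 B$. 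Finally Constraint~\ref{c:rules} holds because within each block the capture pattern realizes the valid tiling $\tau$, and across blocks the relevant horizontal/vertical successors again lie in disjoint blocks so no forbidden adjacency of captured tiles arises --- here one must be a little careful that a connector interval in $A$ capturing a tile from a partial grid in $A$ does not create a rule violation with some $A$-tile it was already capturing, but that configuration already lived in $A \in \PP_\TT$.

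I expect the main obstacle to be the block-by-block completion of $<_2$ in the presence of partial grids and pre-existing ``wrong'' tilings inside a single factor: one has to show that for each weak coordinate $(x,y)$ there is a single consistent $<_2$-ordering of the tiles and connector endpoints in that block which simultaneously (i) captures a $\tau(x,y)$-tile from the full tile-grid for every connector interval in the block, (ii) does not destroy any capture that some interval was forced into by Constraint~\ref{c:propagateintersect} within its own factor, and (iii) keeps all 1-tiles below all 2-tiles as required. This is exactly the point the text flags with the "two tile types" example, and making it precise --- essentially, showing the relevant finite partial order on a block always extends to such a linear order --- is the crux; once that local extension lemma is in hand, the global procedure and the constraint verification are routine given Lemmas~\ref{lemma:wc unique}, \ref{lemma:wc inter}, and \ref{lemma:wc lex} and Corollary~\ref{cor:wc}.
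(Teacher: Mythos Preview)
Your approach is essentially the paper's: set $A<_{1,3}B$, invoke the antichain/$<_3$ trick to confine every copy of an element of $\EE$ to a single factor, align the weakly-coordinatized blocks antilexicographically, complete $<_2$ block by block according to a fixed solution $\tau$, and then argue the remaining completion can avoid any further intersections of special intervals. You also correctly identify the within-block completion as the crux and the role of Corollary~\ref{cor:wc}(2) and the two-tile restriction; the paper's concrete recipe there (take $I_A$, $I_B$ to be the common intersections, put all of $A$'s block below the $2$-tiles of $B$ when $\tau(x,y)=1$, and slot the bottom endpoint of $I_B$ and the $1$-tiles of $B$ into $I_A$) is exactly the local extension you are asking for.

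One place where your verification is too quick: Constraints~\ref{c:path}--\ref{c:speciallex} are \emph{not} purely internal to a single copy of an element of $\EE$. Path-successor, coordinatization, and horizontal/vertical successor all go through \emph{capture}, which relates a copy $E$ to an external point $x$ with $E<_{1,3}x$. With $A<_{1,3}B$, copies lying in $A$ can a priori capture points of $B$; in particular a copy of $E^i_X$ or $E^i_P$ in $A$ could capture a $P^i$-point of $B$, manufacturing a new coordinatization or path-successor and violating Constraint~\ref{c:path}, \ref{c:originpred}, \ref{c:coords}, or \ref{c:speciallex}(a). The paper blocks this by first extending $<_2$ so that all copies of $E^i_P$, $E^i_O$ in $B$ lie $<_2$-below all of $A$ (and those in $A$ lie $<_2$-below all copies of $E^0_G$, $E^1_T$ in $B$); a short argument (its Claim~2) then shows no copy in one factor captures any $P^i$-point of the other, after which Constraints~\ref{c:path}--\ref{c:speciallex}(a) and \ref{c:pathbelow} really do reduce to each factor separately. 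Your ``park the non-weakly-coordinatized points in separate blocks so as not to interfere'' gestures at this but does not pin down the specific placement needed; once you insert that step, the rest of your outline matches the paper's proof.
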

\begin{proof}
Let $A, B \in \PP_\TT$, and begin by defining $C$ to be the disjoint union of $A$ and $B$, so the order relations are only partially defined. Next, complete  $<_1$ and $<_3$ in $C$ so that $A <_1 B$ and $A <_3 B$. Thus, all that remains is to determine how points in $A$ are $<_2$-related to points in $B$, while satisfying the constraints.

\begin{claim}
For any $<_2$-completion of $C$, and any $E \subset C$ a copy of some element of $\EE$, either $E \subset A$ or $E \subset B$.
\end{claim}
\medskip
{\noindent\textbf{Proof of Claim:} \ignorespaces} Suppose $a \in E$ and $a \in A$. As $A <_1 B$, all points $a' \in E$ with $a' <_1 a$ are in $A$. As ${<_1} = {<_3^{opp}}$ on $E$, for any $a' \in E$ such that $a' >_1 a$ we have $a' <_3 a$; as $B >_3 A$, such $a'$ are also in $A$.
\hfill $\lozenge$
\medskip

We now extend $<_2$ in $C$ so that all copies of $E^i_O$ and $E^i_P$ in $B$ are $<_2$-below all points in $A$, and similarly so that all copies of $E^i_O$ and $E^i_P$ in $A$ are $<_2$-below all copies of $E^0_G$ and $E^1_T$ in $B$ (here we use Constraint \ref{c:pathbelow}). In Figure \ref{fig:canonical3}, this is satisfied if we erase the dividing line. Given this extension, we prove the next claim.

\begin{claim} \label{cl:no coord}
For any $<_2$-completion of $C$, let $E \subset C$ be a copy of some element of $\EE$ in one factor. Then $E$ captures no $P^i$-points in the other factor.
\end{claim}
\medskip
{\noindent\textbf{Proof of Claim:} \ignorespaces} 
If $E \subset B$, then it captures no points in $A$, as $A <_1 B$. If $E \subset A$, it captures no $P^i$-points in $B$, as all such points are $<_2$-below all points in $A$.
\hfill $\lozenge$
\medskip

Constraints \ref{c:path}--\ref{c:coords} and \ref{c:speciallex}(a) follow immediately from the claims above and the fact that the constraints hold in each factor. Constraint \ref{c:pathbelow} holds by the paragraph before Claim \ref{cl:no coord}, and Constraint \ref{c:no01} holds as we have identified no points.

The remaining constraints concern the relations between special intervals. For each $(x, y) \in \N^2$, we may consider the closed $<_2$-interval $I^A_{x, y}$, whose endpoints are the $<_2$-least and greatest points weakly coordinatized by $(x, y)$ in $A$, and similarly $I^B_{x, y}$. By Lemma \ref{lemma:wc lex}, in each factor these intervals are non-overlapping and antilexicographically increasing with respect to $<_2$. 
We may thus $<_2$-align each $I^A_{x, y}$ with $I^B_{x, y}$, and set $I^X_{x,y} <_2 I^Y_{x',y'}$ for $X, Y \in \set{A, B}$ and $(x,y) <_{antilex} (x',y')$ (as in Figure \ref{fig:canonical3}). From this, it follows that Constraint \ref{c:speciallex}(b) is satisfied. Thus the Constraints \ref{c:path}--\ref{c:no01} that we have discussed so far will remain satisfied for any completion of $<_2$. 

We now consider each coordinate-pair $(x, y)$ one at a time, and determine the $<_2$-order of the points weakly coordinatized by $(x, y)$ independently of what we do at other weak coordinates.
 
Let $\theta \colon \N^2 \to \set{1,2}$ be a valid tiling. For now, we assume there is a connector interval in $A$ and tile set in $B$, each weakly coordinatized by $(x, y)$.

Suppose $\theta(x, y) = 1$. We will work entirely in $I^A_{x, y}$ and $I^B_{x, y}$ (and by Corollary \ref{cor:wc}(1), all special interval endpoints in these intervals are weakly coordinatized by $(x, y)$). Figure \ref{fig:joint} shows an example of the joint embedding procedure at a coordinate $(x, y)$ with $\theta(x, y) = 1$, where we have zoomed in on the special intervals. In Figure \ref{fig:joint}, in $A$ there is a connector interval capturing a 2-tile from a tile set and in $B$ there is a connector interval capturing a 1-tile. In $C$, the connector interval from $A$ captures the correct tile from $B$ and all the special intervals intersect.

We now describe the general procedure when $\theta(x,y) = 1$. Let $I_A$ be the intersection of all special intervals in $I^A_{x,y}$, and $I_B$ for $I^B_{x,y}$ (these are non-empty by Lemma \ref{lemma:wc inter}, and are shown in Figure \ref{fig:joint}). We first set all points from $A$ $<_2$-below all the 2-tiles from $B$. Note the bottom endpoint of $I_B$ must be $<_2$-below all the 2-tiles in $B$, as must all the 1-tiles in $B$ by Corollary \ref{cor:wc}(2). Thus we may set $I_A$ to contain all the 1-tiles from $B$ as well as the bottom endpoint of $I_B$. This makes all the connector intervals in $I^A_{x,y}$ capture all 1-tiles in $I^B_{x,y}$, and makes $I_A$ intersect $I_B$ (this intersection is shown in Figure \ref{fig:joint}), so all special intervals in $I^A_{x,y}$ intersect those in $I^B_{x,y}$. Finally, we  arbitrarily complete $<_2$ to a linear order, when restricted to points in $I^A_{x,y} \cup I^B_{x,y}$.

The case $\theta(x, y) = 2$ is similar. 

\begin{figure} 
	\begin{center}
\includegraphics[scale=.4]{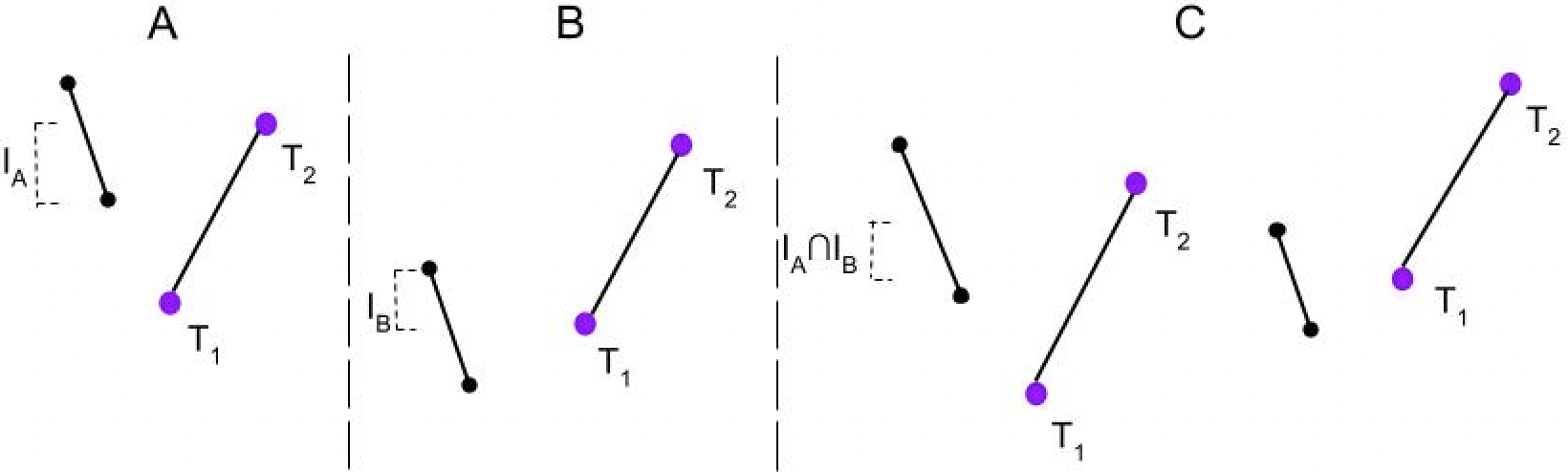}
\end{center}
\caption{An example of joint embedding at $(x, y)$ with $\theta(x, y) = 1$, projected onto $<_1, <_2$. The lines with black endpoints represents connector intervals, while the purple points represent tiles.}
\label{fig:joint}
\end{figure}

If there is no connector interval in $A$ and tile set in $B$, each weakly coordinatized by $(x, y)$, the process is simpler. We just intersect $I_A$ with $I_B$ to ensure all the special intervals in $A$ weakly coordinatized by $(x, y)$ intersect all those in $B$ weakly coordinatized by $(x, y)$.

Because we have made every special interval from $I^A_{x, y}$ intersect every special interval $I^B_{x, y}$, we will satisfy Constraints 
\ref{c:originintersect}--\ref{c:specialintersect}, with Constraint \ref{c:intersectpred} additionally using Corollary \ref{cor:wc}(3). We have also tiled every  $G^0$-point in $A$ weakly coordinatized by $(x, y)$ according to $\theta(x, y)$, and not tiled any $G^0$-point in $B$, and so will satisfy Constraint \ref{c:rules}. Because we have only intersected special intervals on a given axis with those on the same axis, Constraint \ref{c:axisintersect} holds as well.

To finish, we must complete $<_2$ to a linear order while still satisfying Constraints \ref{c:originintersect}--\ref{c:rules}. By inspection, they will still be satisfied if we do not intersect any further special intervals during the completion. Given a special interval $I$ in one factor and $I'$ in the other that do not yet intersect, we will be forced to intersect $I$ and $I'$ only if either there is another special interval $J$ intersecting both $I$ and $I'$, or there are special intervals $J$ and $J'$ both intersecting $I'$ with $I$ $<_2$-between them (or the same configuration with the roles of $I$ and $I'$ reversed). Recall we only intersected special intervals with the same weak coordinates. Since we also intersected all special intervals at a given set of weak coordinates, the first configuration cannot appear.  Since our partially-defined structure satisfies Constraint \ref{c:specialintersect}, $J$ and $J'$ must intersect each other in the second configuration, and thus $I$ cannot be $<_2$-between them.

Thus there is some way to complete $<_2$ to a linear order while not intersecting any further special intervals, and any such completion will suffice to finish defining $C$.
\end{proof}

\subsection{From the $<_1$-JEP to the JEP} \label{sub:jep}

We first describe why we initially restricted ourselves to the $<_1$-JEP. Note that our definition of capture and the final parts of Constraint \ref{c:originintersect} and \ref{c:propagateintersect} are asymmetric with respect to $<_1$. When jointly embedding our canonical models $A^*_{<_1}$ and $B^*_{<_1}$, if we were not forced to put $A^*_{<_1} <_1 B^*_{<_1}$, we could trivially jointly embed them by putting $A^*_{<_1} >_1 B^*_{<_1}$. But then no connector intervals in $A^*_{<_1}$ would capture any tiles in $B^*_{<_1}$, and so this would not encode a solution to the tiling problem. One could try to remove these asymmetries, but then the JEP will fail.



In order to remove the requirement of $<_1$-JEP from Lemma \ref{lemma:1JEPtotile}, we slightly adjust the class $\PP_\TT$ we are working in. For each 0-superscripted element of $\EE$, we introduce a corresponding 2-superscripted element to $\EE$ from $\AA$, and for each 1-superscripted  element of $\EE$ we introduce a corresponding 3-superscripted element to $\EE$ from $\AA$, thus doubling the size of $\EE$. We define the corresponding unary predicates as before.

The idea is that 2-superscripted elements should behave like 0-superscripted ones, and 3-superscripted elements like 1-superscripted ones, with the exception that 0-superscripted grids should be tiled by 1-superscripted tiles while 2-superscripted grids should be tiled by 3-superscripted tiles. We will also use $<_2$ to separate the $0,1$-superscripted elements from $2,3$-superscripted elements.

Thus, given a string tiling problem $\TT$, we define a 3-dimensional permutation class $\QQ_\TT$ as follows. We use all the constraints from $\PP_\TT$, and then duplicate those constraints replacing 0-superscripted and 1-superscripted predicates with 2-superscripted and 3-superscripted predicates, respectively.

 We also add the following constraints.
 \begin{enumerate}
 \item[$\begin{NoHyper}(\ref{c:no01}\end{NoHyper}^*)$] \label{c:no0123} Constraint \ref{c:no01} is replaced by a constraint forbidding the identification of any points from 2 distinctly-superscripted elements of $\EE$.  
 \item[(13)]  All copies of $\set{0,1}$-superscripted elements of $\EE$ must be $<_2$-below all copies of $\set{2,3}$-superscripted elements of $\EE$.
 \end{enumerate}

\begin{lemma} \label{lemma:tiletojep}
Let $\TT$ be a string tiling problem, and $\QQ_\TT$ the corresponding 3-dimensional permutation class. If $\TT$ has a solution, then $\QQ_\TT$ has the JEP.
\end{lemma}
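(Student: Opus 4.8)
The plan is to reduce the JEP for $\QQ_\TT$ to the $<_1$-JEP for $\PP_\TT$, which is handled by Lemma \ref{lemma:tilingtoJEP}. Given $A, B \in \QQ_\TT$, I would start by taking $C = A \sqcup B$ and, as in Lemma \ref{lemma:tilingtoJEP}, complete $<_1$ and $<_3$ so that $A <_1 B$ and $A <_3 B$; the same argument as the first Claim in that proof shows no copy of an element of $\EE$ straddles the two factors, so no unintended unary predicates or captures are created across the cut. The point of the doubled alphabet is precisely to break the $<_1$-asymmetry: whichever way we are forced to place the factors, one ``half'' of the alphabet is oriented the right way. So the key observation to make is that placing $A <_1 B$ is good for the $\{0,1\}$-superscripted part of $A$ tiling the $\{0,1\}$-superscripted part of $B$, and symmetrically the $\{2,3\}$-superscripted part of $B$ can tile the $\{2,3\}$-superscripted part of $A$ — and by the new Constraint (13) these two regions live in disjoint $<_2$-intervals of each factor, so the two tilings can be carried out independently without interfering.

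Concretely, I would split each factor by Constraint (13) into its $\{0,1\}$-part (call it $A_{01}$, $B_{01}$, all copies of $0,1$-superscripted elements of $\EE$ together with the path points feeding them) sitting $<_2$-below its $\{2,3\}$-part ($A_{23}$, $B_{23}$). On the pair $(A_{01}, B_{01})$ I would run the $<_1$-joint embedding procedure of Lemma \ref{lemma:tilingtoJEP} verbatim, using the solution $\theta$ of $\TT$: this inserts tiles from $B_{01}$ into the connector intervals of $A_{01}$ and makes all special intervals with equal weak coordinates intersect. On the pair $(B_{23}, A_{23})$ — note the reversal of roles, since now $B_{23} <_1 A_{23}$ is the ``correct'' orientation for $2,3$-superscripted capture — I would run the very same procedure with the roles of the two factors swapped, so that tiles from $A_{23}$ fall into the connector intervals of $B_{23}$. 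Since $A_{01} <_2 A_{23}$ and $B_{01} <_2 B_{23}$, aligning the $\{0,1\}$-blocks and then the $\{2,3\}$-blocks gives a consistent partition of $C$ into disjoint increasing $<_2$-intervals, and $<_2$ is completed on each block as in the earlier lemma. All the duplicated constraints for $2,3$-superscripts hold on $(B_{23},A_{23})$ by the symmetric run of the argument, the $0,1$-constraints hold on $(A_{01},B_{01})$ by Lemma \ref{lemma:tilingtoJEP}, Constraint $(\ref{c:no01}^*)$ holds since we identify no points, and Constraint (13) holds because the $\{0,1\}$-blocks of both factors were placed $<_2$-below the $\{2,3\}$-blocks of both factors.

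The main obstacle is checking that no constraint is violated \emph{across} the two superscript-blocks or across the two factors in a way the per-block runs don't already cover — in particular that no copy of an element of $\EE$ and no special-interval intersection is accidentally created between, say, $A_{01}$ and $B_{23}$. This is where Constraint $(\ref{c:no01}^*)$, Constraint (13), and the $<_1/<_3$ placement do the work: a copy of an element of $\EE$ lies entirely in one factor (first Claim) and, by the antichain condition together with the ${<_1}={<_3}^{opp}$ property, entirely within one specified copy, hence within one superscript-block; and intersections of special intervals from different superscript-blocks are ruled out since the blocks occupy disjoint $<_2$-intervals, so Constraints \ref{c:originintersect}--\ref{c:rules} never see a cross-block pair. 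Once this separation is established, the two independent invocations of Lemma \ref{lemma:tilingtoJEP} finish the proof.
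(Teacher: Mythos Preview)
Your overall strategy---split each factor into its $\{0,1\}$-block and its $\{2,3\}$-block using Constraint~(13), then run the procedure of Lemma~\ref{lemma:tilingtoJEP} separately on each pair of blocks---is exactly what the paper does, and the cross-block and cross-factor checks you outline are the right ones.

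There is, however, an internal inconsistency. Having fixed $A <_1 B$ and $A <_3 B$ at the outset, you then assert that ``$B_{23} <_1 A_{23}$ is the `correct' orientation for $2,3$-superscripted capture'' and run the procedure on $(B_{23}, A_{23})$ with reversed roles. But $A <_1 B$ forces $A_{23} <_1 B_{23}$, so this reversal is impossible as stated. More importantly, no reversal is needed: the constraints for $2,3$-superscripts are verbatim duplicates of the $0,1$-constraints, and the procedure of Lemma~\ref{lemma:tilingtoJEP} handles \emph{arbitrary} $A,B \in \PP_\TT$ with $A <_1 B$---it does not assume that connector intervals live only in $A$ or tile sets only in $B$. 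So you simply apply the procedure to $(A_{23}, B_{23})$ with $A_{23} <_1 B_{23}$, exactly as you do to $(A_{01}, B_{01})$. The ``asymmetry-breaking'' role of the doubled alphabet that you describe is the mechanism for the \emph{converse} direction (Lemma~\ref{lemma:jeptotile}), where one must argue that \emph{any} joint embedding of the canonical models encodes a tiling; it plays no role in the present lemma, where you get to choose the $<_1$-placement. Once this confusion is removed, your argument is correct and coincides with the paper's.
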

\begin{proof}
Fix a tiling $\theta \colon \N^2 \to \set{1,2}$. Given $A,B$ in our new class, split both into 2 $<_2$-intervals so that the lesser interval contains all copies of $\set{0,1}$-superscripted elements of $\EE$, and the greater interval contains all copies of $\set{2,3}$-superscripted elements of $\EE$. We may then apply the joint embedding procedure of Lemma \ref{lemma:tilingtoJEP} separately to the pair of $<_2$-lesser intervals and the pair of $<_2$-greater intervals.
\end{proof}

In the following lemma, we weaken the $<_1$-JEP from earlier to simply the JEP. This is done by adjusting the canonical models so that we must perform the $<_1$-JEP with either a copy of our earlier canonical models, or with a copy of the earlier canonical models using $\set{2,3}$-superscripted elements instead of $\set{0,1}$-superscripted elements.

\begin{figure}[h]
	\begin{center}
\includegraphics[scale=.4]{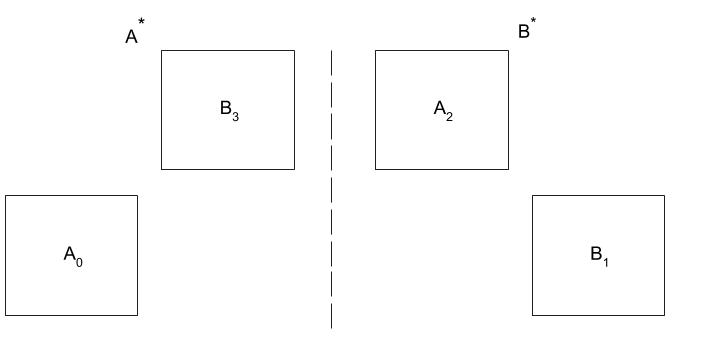}
\end{center}
\caption{The canonical models in $\QQ_\TT$, projected onto $<_1, <_2$.}
\label{fig:double}
\end{figure}

\begin{lemma} \label{lemma:jeptotile}
Let $\TT$ be a string tiling problem, and $\QQ_\TT$ the corresponding 3-dimensional permutation class. If $\QQ_\TT$ has the JEP, then $\TT$ has a solution.
\end{lemma}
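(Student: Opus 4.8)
The plan is to mimic the proof of Lemma \ref{lemma:1JEPtotile}, but to remove the hypothesis of $<_1$-JEP by encoding the choice ``$A^* <_1 B^*$'' into the canonical models themselves. Concretely, I would set up two canonical models in $\QQ_\TT$, call them $A^{**}$ and $B^{**}$, each built as the $<_2$-ordered concatenation of two blocks: a block of $\set{0,1}$-superscripted material and a block of $\set{2,3}$-superscripted material (this ordering being forced by new Constraint (13)). In $A^{**}$ I would place a full $G^0$-grid and tile-origin/tile structure using $0$-superscripted elements in its lower block, and a full tile-set grid (the analogue of $B^*_{<_1}$ but using $3$-superscripted tiles) in its upper block; in $B^{**}$ I would do the mirror image: a full grid of $3$-superscripted tile-sets' counterpart — more precisely, $B^{**}$ has the $1$-superscripted tile-set grid in its lower block and the $2$-superscripted $G^0$-grid analogue in its upper block. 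The point, exactly as in Figure \ref{fig:double}, is that to jointly embed $A^{**}$ and $B^{**}$ one must interleave them so that the $\set{0,1}$-blocks align (forcing the $0$-grid of $A^{**}$ against the $1$-tiles of $B^{**}$) and the $\set{2,3}$-blocks align (forcing the $2$-grid of $B^{**}$ against the $3$-tiles of $A^{**}$). In either block, one is then in precisely the situation of the $<_1$-JEP: the grid must end up $<_1$-below the tiles, because the other $<_1$-arrangement would leave a grid-origin and a tile-origin that cannot capture one another, violating Constraint \ref{c:originintersect} (and its $\set{2,3}$-duplicate).

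The steps, in order, would be: (1) observe $A^{**}, B^{**} \in \QQ_\TT$ (they satisfy all constraints by construction, being essentially two disjoint copies of the $\PP_\TT$-canonical models separated in $<_2$ by Constraint (13)); (2) apply the JEP to obtain $C^{**}$ jointly embedding them, noting again that since the two factors cannot share points (they are $<_1$-separated within each block, and Constraint $(\ref{c:no01}^*)$ forbids cross-superscript identifications) we may take $C^{**}$ on base set $A^{**} \sqcup B^{**}$; (3) argue that in $C^{**}$, within the $\set{0,1}$-block, the $G^0$-grid of $A^{**}$ lies $<_1$-below the $1$-tile grid of $B^{**}$ — here is where I use that the alternative forces a grid-origin $<_1$-above a tile-origin with which it must nonetheless intersect-and-capture by Constraint \ref{c:originintersect}, and capture is asymmetric in $<_1$, a contradiction; symmetrically the $2$-grid of $B^{**}$ lies $<_1$-below the $3$-tile grid of $A^{**}$ in the $\set{2,3}$-block; (4) now run the argument of Lemma \ref{lemma:1JEPtotile}: Constraint \ref{c:originintersect} forces the grid-origin to capture a tile from the tile-origin, Constraint \ref{c:propagateintersect} propagates this to a tiling of the whole $G^0$-grid, and Constraint \ref{c:rules} guarantees the tiling respects the rules of $\TT$; (5) read off $\theta(x,y) = i$ from the tile-type captured at the connector interval weakly coordinatized by $(x,y)$, exactly as before, yielding a solution to $\TT$.

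The main obstacle I expect is step (3): making rigorous the claim that the JEP is forced to place each grid $<_1$-below the corresponding tile grid. One has to be careful that jointly embedding $A^{**}$ and $B^{**}$ cannot escape the constraints by some clever $<_1$- or $<_3$-arrangement that mixes the blocks, or that makes only a \emph{partial} grid line up (recall $\QQ_\TT$ tolerates partial grids). The resolution should be that the grid-origins and tile-origins are \emph{present} in the canonical models (they are full grids), so whatever the joint embedding does, there is a grid-origin of $A^{**}$ and a tile-origin of $B^{**}$ of matching superscript-type in $C^{**}$; Constraint \ref{c:originintersect} then forces them to intersect, and if the grid-origin were $<_1$-above the tile-origin the capture relation (which requires the capturing special interval to be $<_1$- and $<_3$-below) cannot hold in the direction needed to tile, giving the contradiction. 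Once the correct $<_1$-orientation is pinned down in each block, the rest is a verbatim replay of Lemma \ref{lemma:1JEPtotile} applied blockwise, so no further difficulty arises.
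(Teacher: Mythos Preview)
Your construction of $A^{**}$ and $B^{**}$ matches the paper's, but step (3) contains a genuine gap. You claim that if the $G^0$-grid-origin were $<_1$-above the $1$-tile-origin, then Constraint \ref{c:originintersect} would be violated because capture cannot hold. This misreads the constraint: Constraint \ref{c:originintersect} only demands that the special intervals \emph{intersect} as $<_2$-intervals (plus a $<_3$-condition when $I^0 <_1 I^1$). Intersection without capture is perfectly legal. Indeed, the paper explicitly notes just before this lemma that with the wrong $<_1$-orientation one can jointly embed the old $A^*_{<_1}$ and $B^*_{<_1}$ trivially, with no constraint violated and no tiling encoded. So your proposed contradiction does not arise, and you cannot force the correct $<_1$-orientation in each block separately.

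The paper's fix is not to force both blocks into the right orientation, but to argue disjunctively using the $<_1$-structure you omitted from your description of $A^{**}$ and $B^{**}$. The paper sets $A_0 <_1 B_3$ inside $A^*$ and $A_2 <_1 B_1$ inside $B^*$. Then in any joint embedding $C^*$, if it is \emph{not} the case that $A_0 <_1 B_1$, pick $a_0 \in A_0$, $b_1 \in B_1$ with $b_1 <_1 a_0$; for arbitrary $a_2 \in A_2$, $b_3 \in B_3$ transitivity gives $a_2 <_1 b_1 <_1 a_0 <_1 b_3$, so $A_2 <_1 B_3$. Thus at least one of the two blocks has its grid $<_1$-below its tiles, and in that block the argument of Lemma \ref{lemma:1JEPtotile} runs. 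Your $<_2$-concatenation alone does not supply this leverage; the internal $<_1$-ordering of the two halves of each canonical model is the point of the whole construction.
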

\begin{proof}
We describe our new canonical models, which are pictured in Figure \ref{fig:double}. Let $A_0$ be as $A^*_{<_1}$ in Lemma \ref{lemma:1JEPtotile} and $B_3$ be as $B^*_{<_1}$ in Lemma \ref{lemma:1JEPtotile} but with 3-superscripted elements of $\EE$ instead of 1-superscripted elements of $\EE$. Let $A^* = A_0 \sqcup B_3$, with $A_0 <_{1,2,3} B_3$.

Let $A_2$ be as $A^*_{<_1}$ in Lemma \ref{lemma:1JEPtotile} but with 2-superscripted elements of $\EE$ instead of 0-superscripted elements of $\EE$ and $B_1$ be as $B^*_{<_1}$ in Lemma \ref{lemma:1JEPtotile}. Let $B^* = A_2 \sqcup B_1$, with $A_2 <_{1,3} B_1$ and $B_1 <_2 A_2$.

In $A^*$, as $<_1$ and $<_3$ agree between $A_0$ and $B_3$, any copy of $E \in \EE$ that occurs must be contained either in $A_0$ or in $B_3$. Similarly in $B^*$, any copy of $E \in \EE$ that occurs must be contained either in $A_2$ or in $B_1$.

As $A^*_{<_1}, B^*_{<_1}$ in Lemma \ref{lemma:1JEPtotile} were in $\PP_\TT$, $A^*$, $B^*$ will be in $\QQ_\TT$. If $\QQ_\TT$ has the JEP, there is some $C^*$ embedding $A^*, B^*$.

By Constraint \ref{c:no01}$^*$, $C^*$ must contain $A^* \sqcup B^*$. Suppose in $C^*$ that $A_0 <_1 B_1$. Then as in Lemma \ref{lemma:1JEPtotile}, we must produce a tiling. If we don't have $A_0 <_1 B_1$ in $C^*$, then it must be that $A_2 <_1 B_3$, and again we must produce a tiling as in Lemma \ref{lemma:1JEPtotile}.
\end{proof}

\begin{theorem}
	       There is no algorithm that, given a finite set of forbidden 3-dimensional permutations, decides whether the corresponding 3-dimensional permutation class has the JEP.
\end{theorem}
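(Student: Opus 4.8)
The plan is to package the two preceding lemmas into a computable reduction from the string tiling problem. Recall from Section~2.1 that the analogue of Theorem~\ref{thm:Berger} holds for string tiling problems: there is no algorithm that, given a string tiling problem $\TT$, decides whether $\TT$ admits a solution. Hence it suffices to describe an algorithm that, on input a string tiling problem $\TT$, outputs a finite set of forbidden 3-dimensional permutations whose avoidance class $\QQ_\TT$ has the JEP if and only if $\TT$ has a solution.

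First I would check that the assignment $\TT \mapsto \QQ_\TT$ is effective. The antichain $\AA$ and the finite collection $\EE$ of distinguished antichain elements are fixed once and for all and do not depend on $\TT$; the unary predicates $P^i, O^i, G^0, T^1_1, T^1_2$ and their $\{2,3\}$-superscripted analogues are defined uniformly from $\EE$; and each of Constraints~\ref{c:path}--\ref{c:rules}, together with the modified Constraint~\ref{c:no01} and the extra Constraint~(13) defining $\QQ_\TT$, unwinds into a finite list of forbidden finite substructures — as was indicated when the forbidden substructures for Constraints~\ref{c:originpred}, \ref{c:coords}, \ref{c:axisintersect}, and \ref{c:rules} were spelled out, each informal constraint corresponds to finitely many forbidden configurations, one for each way the points of the relevant copies of elements of $\EE$ may interleave in $<_2$. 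The only place where $\TT$ enters is Constraint~\ref{c:rules}, where the forbidden configurations are read directly off the finitely many tiling rules of $\TT$, the bound $D$ on the relevant distances being part of the input. Thus from $\TT$ one computes the finite forbidden set defining $\QQ_\TT$.

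Next I would invoke Lemma~\ref{lemma:tiletojep} and Lemma~\ref{lemma:jeptotile}: the former gives that if $\TT$ has a solution then $\QQ_\TT$ has the JEP, and the latter gives the converse. Hence $\QQ_\TT$ has the JEP if and only if $\TT$ has a solution. If there were an algorithm deciding the JEP for finitely-constrained 3-dimensional permutation classes, then composing it with the reduction $\TT \mapsto \QQ_\TT$ would decide whether an arbitrary string tiling problem has a solution, contradicting the undecidability of the latter. This proves the theorem.

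There is no substantive obstacle remaining: the real work — designing the canonical models $A^*_{<_1}, B^*_{<_1}$ (and their doubled versions $A^*, B^*$), choosing the constraints, and verifying both directions of the equivalence — has already been done in Lemmas~\ref{lemma:1JEPtotile} through~\ref{lemma:jeptotile}. The one point requiring care is the effectivity bookkeeping of the first step above, i.e.\ confirming that every constraint, including those stated only informally, genuinely translates into a computable finite set of forbidden substructures; this is routine.
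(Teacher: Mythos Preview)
Your proof is correct and follows essentially the same route as the paper: invoke Lemmas~\ref{lemma:tiletojep} and~\ref{lemma:jeptotile} to get that $\QQ_\TT$ has the JEP iff $\TT$ has a solution, and conclude by the undecidability of the string tiling problem. Your additional paragraph on the effectivity of $\TT \mapsto \QQ_\TT$ is a point the paper leaves implicit, so if anything your write-up is slightly more careful.
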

\begin{proof}
	Given a string tiling problem $\TT$, consider the 3-dimensional permutation class $\QQ_\TT$. By Lemmas \ref{lemma:tiletojep} and \ref{lemma:jeptotile}, $\TT$ has a solution if and only if $\QQ_\TT$ has the JEP. As the string tiling problem is undecidable, we are finished. 
	\end{proof}

\begin{corollary}
The JEP is undecidable for $n$-dimensional permutation classes with finitely many forbidden $n$-dimensional permutations, for $n \geq 3$
\end{corollary}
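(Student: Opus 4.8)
The plan is to reduce from the three-dimensional case, which is settled by the Theorem just proved. The key point is that having more linear orders only adds flexibility: a class of $n$-dimensional permutations can encode any given class of $3$-dimensional permutations by appending $n-3$ extra linear orders that are forced to agree with $<_1$.

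\begin{proof}
Fix $n \geq 3$; the case $n=3$ is the Theorem above, so assume $n > 3$. Given a finite set $\FF$ of forbidden $3$-dimensional permutations, let $\CC_\FF$ be the corresponding $3$-dimensional permutation class, and let $\CC_\FF^{(n)}$ be the class of $n$-dimensional permutations $(X; <_1, \dots, <_n)$ such that $<_1 = <_4 = \cdots = <_n$ and the reduct $(X; <_1, <_2, <_3)$ lies in $\CC_\FF$. This class is again defined by forbidding finitely many $n$-dimensional permutations: forbid the two $n$-dimensional permutations on $2$ points witnessing $<_1 \neq <_j$ for each $j \in \set{4,\dots,n}$, and forbid, for each $P \in \FF$, every $n$-dimensional permutation on $|P|$ points whose $\set{<_1,<_2,<_3}$-reduct is isomorphic to $P$ (there are only finitely many such, obtained by choosing $<_4,\dots,<_n$ consistently with $<_1$). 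Since the extra orders are pinned to $<_1$, the forgetful map $(X;<_1,\dots,<_n) \mapsto (X;<_1,<_2,<_3)$ is a bijection between $\CC_\FF^{(n)}$ and $\CC_\FF$ that preserves and reflects embeddings: an embedding of $n$-dimensional permutations is exactly an embedding of the $\set{<_1,<_2,<_3}$-reducts, because the remaining orders carry no additional information. Hence $\CC_\FF^{(n)}$ has the JEP if and only if $\CC_\FF$ has the JEP. Moreover the map $\FF \mapsto \FF^{(n)}$ (the finite set of forbidden $n$-dimensional permutations just described) is computable. If there were an algorithm deciding the JEP for $n$-dimensional permutation classes, composing it with this computable reduction would decide the JEP for $3$-dimensional permutation classes, contradicting the Theorem. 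Therefore no such algorithm exists.
\end{proof}

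The only mild subtlety is verifying that $\CC_\FF^{(n)}$ is genuinely a hereditary (substructure-closed) class cut out by finitely many forbidden patterns, which is immediate since both the constraints ``$<_1 = <_j$'' and ``the $\set{<_1,<_2,<_3}$-reduct avoids $P$'' are hereditary and finitely axiomatizable by forbidden substructures; I expect this bookkeeping to be the main thing to write carefully, but there is no real obstacle.
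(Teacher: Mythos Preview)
Your argument is correct and follows the same high-level strategy as the paper---reduce the $n$-dimensional problem to the $3$-dimensional one via a computable map on finite constraint sets---but the two constructions differ. You force the extra orders $<_4,\dots,<_n$ to coincide with $<_1$, so that $\CC_\FF^{(n)}$ is in literal bijection with $\CC_\FF$ and the JEP equivalence is immediate. The paper instead leaves $<_4,\dots,<_n$ unconstrained, taking $L(\CC)$ to forbid every $n$-dimensional expansion of each $3$-dimensional forbidden pattern, and then argues JEP transfers by passing to reducts in one direction and choosing a compatible expansion of a joint embedding of reducts in the other. Your route trades a slightly larger forbidden set for a cleaner equivalence; the paper's route uses a smaller forbidden set but needs the (easy) observation that a joint embedding of reducts can always be expanded compatibly.

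One small slip: for each $j\in\{4,\dots,n\}$ there are $2^{n-2}$, not two, isomorphism types of $n$-dimensional permutations on two points in which $<_1$ and $<_j$ disagree, since the remaining $n-2$ orders may be chosen freely. This does not affect the argument, as the set is still finite and computable; and once all of these are forbidden, it is indeed enough to forbid only the single expansion of each $P\in\FF$ with $<_4=\cdots=<_n=<_1$, exactly as you claim, since no other expansion of $P$ can occur as a substructure of a member of the class.
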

\begin{proof}
We have already shown this for $n=3$, so fix $n > 3$. To any 3-dimensional pattern class $\CC$, we can associate an $n$-dimensional permutation class $L(\CC)$ whose constraints are all expansions of the constraints from $\CC$ to $n$ orders. Also, given any $n$-dimensional permutation, we may consider its reduct to the first 3 orders. Both of these operations preserve that there are only finitely many forbidden $n$-dimensional permutations.

We claim that $\CC$ has the JEP if and only if $L(\CC)$ has the JEP. Suppose $L(\CC)$ has the JEP. Given $A, B \in \CC$, we may expand them to structures in $L(\CC)$, jointly embed the expansions, and then take the reduct, giving a joint embedding of $A, B$. Now suppose $\CC$ has the JEP. Given $A, B \in L(\CC)$ we may jointly embed their reducts, and any expansion of the result will give a joint embedding of $A$ and $B$.
\end{proof}

\section{Concluding Remarks}

We finish by discussing the obstructions to adapting this proof to permutation classes. As mentioned before, the main issue is the loss of an easy proof for the claim at the beginning of Lemma \ref{lemma:tilingtoJEP}. If simply taking the projection of our 3-dimensional joint embedding procedure to the first 2 orders, transitivity will force us to produce many configurations we do not intend to. For example, consider the following situation. Let $F$ be a forbidden permutation, and suppose $F = F_1 \sqcup F_2$ with $F_1 <_{1,2} F_2$ (more elaborate constructions can remove this requirement). Suppose we are performing the $<_1$-JEP on $A, B$, and there are $a \in A$ and $b \in B$ such that we must set $a <_2 b$. We may instead consider the structure $A'$ formed from $A$ by placing $F_1 <_2 a$ and $B'$ formed from $B$ by placing $b <_2 F_2$. If $A'$ and $B'$ are still in our permutation class, then when jointly embedding them, transitivity will force $F_1 <_{1,2} F_2$, and so we will create a copy of $F$.

A perhaps more basic manifestation of the difficulty in ruling out unintended configurations in permutation classes is the question of how to represent arbitrarily large grids, and thus the canonical models, in a class $\PP_\TT$ such that $\PP_\TT$ is not the class of all permutations.

\subsection{Acknowledgments} I thank Gregory Cherlin for many discussions on the material in this paper, and the referee for several corrections and suggestions for improving the presentation.

\nocite{*}
\bibliographystyle{abbrvnat}
\bibliography{jep-bib}

\end{document}